\newtheorem*{ass}{Assumption}
\newtheorem*{theorem}{Theorem}
\newtheorem{lemma}{Lemma}
\newtheorem{corollary}{Corollary}
\newcommand{\D}{\mathbf{D}^{0}}
\newcommand{\Dp}{\mathbf{D}^{+}}
\newcommand{\Dm}{\mathbf{D}^{-}}
\newcommand{\x}{\mathbf{x}}
\newcommand{\e}{\mathbf{e}}
\newcommand{\y}{\mathbf{y}}
\newcommand{\ue}{u}
\newcommand{\ueb}{\mathbf{u}}
\newcommand{\pe}{p}
\newcommand{\feb}{\mathbf{f}}
\newcommand{\Xeb}{\mathbf{X}}
\newcommand{\Ueb}{\mathbf{U}}
\newcommand{\F}{\mathbf{F}}
\newcommand{\dc}{\delta_{c}}
\newcommand{\length}{L}
\newcommand{\Dom}{\Omega}
\newcommand{\Mu}{M_{\ueb}}
\newcommand{\MDu}{M_{\frac{\partial \ueb}{\partial \x}}}
\newcommand{\MF}{M_{\F}}
\newcommand{\LF}{L_{\F}}
\newcommand{\Md}{M_{\dc}}
\newcommand{\Ld}{L_{\dc}}
\newcommand{\uc}{\tilde{u}}
\newcommand{\ucb}{\tilde{\mathbf{u}}}
\newcommand{\wcb}{\mathbf{w}}
\newcommand{\wc}{w}
\newcommand{\pc}{\tilde{p}}
\newcommand{\Xcb}{\tilde{\mathbf{X}}}
\newcommand{\LexpM}{\mathbf{\Lambda}}
\newcommand{\Lexp}{\Lambda}
\newcommand{\dx}{h}
\newcommand{\dt}{\Delta t}
\newcommand{\Grid}{G_{h}}
\newcommand{\Muc}{M_{\uc}}
\newcommand{\uEb}{\mathbf{v}}
\newcommand{\uE}{v}
\newcommand{\usEb}{\mathbf{w}}
\newcommand{\upE}{\psi}
\newcommand{\XEb}{\mathbf{Z}}
\newcommand{\pE}{q}
\newcommand{\ures}{\bm{\tau}}
\newcommand{\pres}{\eta}
\newcommand{\Xres}{\bm{\xi}}
\title{Convergence of the Immersed Boundary Method for an Elastically Bound Particle Immersed in a 2D
 Navier-Stokes Fluid Fluid}
\author{Alexandre X. Milewski\thanks{Courant Institute of Mathematical Sciences, New York, NY.} 
  \and Charles S. Peskin\footnotemark[2]
  }
  \date{October 6th, 2025}
\begin{document}

\maketitle

\begin{abstract}
The immersed boundary (IB) method has been used as a means to simulate fluid-membrane interactions in a wide variety of biological and engineering applications. Although the numerical convergence of the method has been empirically
verified, it is theoretically unproved because of the singular
forcing terms present in the governing equations. This paper is motivated by a specific variant of the IB method, in which the fluid is 2 dimensions greater than the dimension of the immersed structure. In these co-dimension 2 problems the immersed boundary is necessarily mollified in the continuous formulation. In this paper we leverage this fact to prove convergence of the IB method as applied to a moving elastically bound particle in a fully non-linear fluid.
\end{abstract}

\section{Introduction}
\label{sec:Intro}
Fluid structure interactions (FSI) consist of a large class of problems, whose computation is particularly challenging when the structure is deformable. FSI problems arise in a broad range of scientific and engineering applications. Some notable examples are bio-mechanics problems such as intracardiac flow \cite{davey2024simulating}, lamprey locomotion \cite{hamlet2018role}, blood-cell deformation \cite{fai2017image}; and engineering problems such as unsteady aerodynamics \cite{haase2001unsteady} and turbulent FSI simulations \cite{ikeno2007finite}

One approach to solving FSI problems are conforming mesh methods, in which new fluid and structural meshes are generated at every timestep to conform with the evolving fluid structure interface. These methods tend to give a high fidelity representation of the physics near the fluid structure interface, but they suffer from the need to regenerate a mesh at every timestep.

The immersed boundary (IB) method takes a different approach: the fluid mesh is described on a static Eulerian grid, and the immersed structure is represented by a network of points on a an evolving secondary mesh. The immersed structure influences the fluid through forces: the potential energy of a given structure configuration determines (via the variational derivative) the distribution of forces applied to fluid, which in turn drives the fluid motion. The fluid then advects (and deforms) the immersed structure with the fluid velocity.

Since the structure mesh and the fluid mesh do not in general coincide, there is a need to transmit the data from the structure mesh to the fluid mesh and vice versa. This task is accomplished through the use of a regularized approximation to the Dirac delta function centred on each point of the immersed structure: Forces on the immersed points act on the fluid mesh points in proportion to the amplitude of the delta function approximant at that mesh point; likewise, the fluid velocity at the nearby fluid gridpoints contribute to the velocity of the immersed mesh point in proportion to the amplitude of the delta function approximant evaluated at the immersed mesh point. As the numerical meshes are refined, the approximate delta function converges to a Dirac delta function. The approximating delta function can be chosen such that the IB scheme preserves mechanical quantities such as momentum and angular momentum.  Energy conservation is also ensured for the time-continuous spacially discretised case, regardless of the choice of regularized delta function,
 provided that the \textit{same} choice is made for velocity interpolation
and force spreading.

Since its inception in the 1970's (see \cite{peskin1972flow}), the method has since been extended with many variations, and has developed into a general methodology for a large range of fluid structure interactions. Some of these variants include: penalty functions to simulate immersed structures with inertia \cite{kim2007penalty}, or to enforce rigid body constraints \cite{kim2016penalty}; models that allow an immersed fibre to resist torsion \cite{lim2008dynamics}; and stochastic formulations that account for thermal fluctuations in the fluid, for simulations on a microscopic length-scale \cite{tabak2015stochastic}.

Fluid structure interaction problems can be classified by comparing the dimension of the structure to that of the ambient fluid. Those dimensions can be the same, as in the case of a 3 dimensional body in a 3 dimensional fluid or a 2 dimensional body in a 2 dimensional fluid; or they can differ by one, as in the case of a 2 dimensional membrane in a 3 dimensional fluid or a 1 dimensional membrane in a 2 dimensional fluid; or they can differ by more than one, as in the cases of a 1 dimensional fibre in a 3 dimensional fluid or a particle in a 3 or 2 dimensional fluid. Some examples of these different classifications being applied in practice are: McQueen and Peskin's simulation of the heart \cite{mcqueen2001heart} has co-dimension 0 for the thick muscular heart walls and co-dimension 1 for the thin valve leaflets\footnote{A more recent IB model for the human heart \cite{davey2024simulating} uses co-dimension 0 for both the the heart walls and the valve leaflets}; Lai and Peskin's \cite{lai2000immersed} simulation of 2 dimensional flow past a cylinder has co-dimension 1; and Han and Peskin's model of synchrony breaking cilia \cite{han2018spontaneous} has co-dimension 2 as the cilia are represented by space curves.

In the co-dimension 2 case, there is a subtlety in that a co-dimension 2 structure (e.g., an infinitely thin fibre in a three-dimensional fluid) cannot exert a drag force on the fluid. It is therefore necessary to somehow incorporate the width of the fibre into the model as a parameter. In the IB method, this is done by the choice of delta function. In other co-dimensional cases the delta function approximant would typically converge to the Dirac delta function, but in the co-dimension 2 case, the width of the approximating delta function remains unchanged as the computational meshes are refined. Thus, even in the continuous model, there is a smooth transition between where the fluid ends and the fibre begins. This makes the numerical scheme in the co-dimension 2 case particularly amenable to analysis, as we shall see.

In the study of IB methods, a question of theoretical interest is the proof of numerical convergence of the discretised equations to the underlying continuous model they approximate. In a typical numerical convergence proof, the argument runs as follows: assuming the solution to the continuous model is sufficiently regular, calculate its residues when the exact solution is substituted into the finite difference scheme. If these residues are uniformly convergent in the limit as the numerical parameters (like gridwidth and timestep size) go to zero, we say the numerical method is consistent. Additionally, a method is stable, if the errors can be bounded by the residues. Lax and Richtmyer's landmark paper `{\it Survey of the stability of linear finite Difference equations}' (see \cite{lax2005survey}) demonstrates that consistent, stable schemes are convergent. Although Lax and Richtmyer's original paper was written in the context of linear PDE's, the same arguments can be extended to the non-linear case. Proofs of this kind have been used since the 1960's to prove numerical convergence of Navier-Stokes solvers, see \cite{chorin1969convergence} as an example.

The same method of proof does not go through as readily in the case of the immersed boundary method. The difficulty is related to the singularity of the Dirac delta function. Progress was made by Y. Mori (2008) \cite{mori2008convergence}, who proved numerical convergence of an IB solver for the velocity field generated by a given distribution of force along a one-dimensional immersed boundary in a two-dimensional Stokes fluid.  This proof involves the velocity field at a single instant of time, and motion of the immersed boundary is not considered. Numerical convergence to the fully coupled equations is still unproved, despite ample empirical evidence that the method is indeed convergent---cf. \cite{lai2000immersed} or \cite{griffith2012simulating}.

In this paper we prove convergence of the IB method applied to the
 case of a co-dimension 2 immersed boundary.  The distinctive feature
 of this case is that the Dirac delta function is already mollified at
 the level of model formulation, since the width of the mollified
 delta function is a physical parameter that represents the small but
 non-zero width of the immersed particle or fibre.  This resolves
 the singularity issue and opens the door to a convergence proof that
 follows the approach pioneered by Peter Lax and Robert Richtmyer.

The remainder of this paper is divided into 4 sections: In \ref{sec:Formulation}, we outline a simple co-dimension 2 IB problem, we describe the numerical scheme, and we state the convergence theorem that we aim to prove. We also state the lemmas (such as consistency and the discrete Helmholtz decomposition) that we will need to prove the theorem. In \ref{sec:proof}, we use the supporting lemmas to prove the convergence theorem. In \ref{sec:lem} we go back and prove the lemmas. And in \ref{sec:sim} we run simulations confirming the results of the theorem.

\section{Problem formulation and main theorem} \label{sec:Formulation}
\subsection{The immersed boundary particle problem} \label{ibproblem}

We consider a viscous incompressible fluid in a two-dimensional $\Dom = [0,\length]\times[0,\length]$ with periodic boundary conditions. The fluid has density $\rho$ and viscosity $\mu$. Its velocity and pressure will be denoted $\ueb(\x,t)$ and $\pe(\x,t)$. Immersed in the fluid is a neutrally buoyant particle, the position and velocity of which will be denoted by $\Xeb(t)$ and $\Ueb(t)$. Note that we use lowercase letters for the fluid variables which are functions of $\x$ and $t$; and uppercase letters for the particle variables, which are functions of $t$ only. The equations of motion are as follows

\begin{align}
 \rho\left(\frac{\partial\ueb}{\partial t} + (\ueb\cdot\nabla)\ueb \right) &= \mu\nabla^2 \ueb - \nabla \pe + \feb \label{ContMom} \\
\nabla\cdot\ueb & = 0 \label{ContIncomp} \\
\feb(\x,t) &= \F(\Xeb(t))\dc(\x-\Xeb(t)) \label{ContFD}\\
\frac{d\Xeb}{dt}(t) & =  \int_{\Dom} \ueb(\x,t)\dc(\x-\Xeb(t))d\x \label{ContInterp} 
\end{align}

Equations \eqref{ContMom}-\eqref{ContInterp} Navier-Stokes equations for a viscous incompressible fluid with an applied force density $\feb(\x,t)$, which in our case is defined by equation \eqref{ContFD}, in which $\F(\Xeb)$ is a given differentiable function that describes an external force acting on the immersed particle when its position is $\Xeb$. This force is applied to the fluid in the neighbourhood of the particle's current position $\Xeb(t)$. The function $\dc$, specifies how the force $\F(\Xeb(t))$ is distributed, or ``spread'' onto the fluid. To be physically consistent, we require that $\dc$ have a small support (the area of which is proportional to $c^2$) and integrate to 1; furthermore, to prove convergence of the IB method, we require that $\dc$ be at least twice differentiable\footnote{Typical IB kernels are only once continuously differentiable, but greater smoothness is certainly possible and is needed for some applications. For example, there are two ``Gaussian-like'' kernels constructed in \cite{bao2016gaussian} that have three continuous derivatives.}. With these combined properties, we may consider $\dc$ to be a regularized approximation to the Dirac delta function.

As discussed in the introduction, our immersed particle has co-dimension 2, and we have to regard $c$ as a physical parameter of the problem. Intuitively, $c$ is proportional to the diameter of the immersed particle. The limit $c\rightarrow 0$ is not a sensible one, since the application of a point force to a fluid results in an infinite velocity at the location where the force is applied.

In the following, we denote the vector components of $\ueb,\feb,\Xeb,\Ueb,$ etc... with subscripts (e.g. $\ueb=(u_{1},u_{2})$).
In the following discussion, we will make an assumption on the regularity of solutions to the continuous problem described above. This assumption will primarily be used to prove consistency of the numerical method, as is typical in finite difference proofs in the style of Lax and Richtmyer, but it will also be used to provide upper bounds on the fluid velocity and its spacial gradient, which are essential in dealing with the nonlinearity present in the Navier-Stokes equations.

\begin{ass}[Regularity of the Model]
For a given choice of initial conditions $\Xeb_{0}$ and $\ueb_{0}(\x)$ satisfying $\nabla \cdot \ueb_{0} = 0$, we assume the system of equations \eqref{ContMom}-\eqref{ContInterp} has a unique solution $\ueb,\nabla\pe,\Xeb$ on $\Dom$ for all time $0\leq t\leq T$, and that this solution is 4 times continuously differentiable. As a consequence of this, we may find constants $\Mu$ and $\MDu$ such that
\begin{equation}
|u_{\alpha}(\x,t)| \leq \Mu \qquad \mathrm{and} \qquad \left|\frac{\partial u_{\alpha}}{\partial x_{\beta}}(\x,t)\right| \leq \MDu \qquad
\end{equation}
For all $\x\in \Dom$, $0\leq t\leq T$, and $\alpha,\beta = 1,2$.
\end{ass}

From the assumptions made in the discussion above, we can define the following constants, which we assemble in table \ref{tbl:constants}.

\begin{table}[htbp]
{\footnotesize
  \caption{Constants which will be useful in proving convergence. Each inequality is assumed to hold for all $\x\in\Dom$, all $0\leq t \leq T$, and---if mentioned---for all $\alpha,\beta=1,2$.}  \label{tbl:constants}
\begin{center}
  \begin{tabular}{|c|c|} \hline
   Constant & \bf Definition \\ \hline
    $M_{\ueb}$ & $ |\ue_{\alpha}(\x,t)| \leq \Mu$  \\
         $\MDu$ & $\left|\frac{\partial \ue_{\alpha}}{\partial x_{\beta}}(\x,t)\right| \leq \MDu$ \\
         $\MF$ & $\|\F(\x)\| \leq \MF$ \\
         $\LF$ & $\quad \|\F(\x) - \F(\y)\| \leq \LF \|\x - \y\|$ \\
         $\Md$ & $\quad |\dc(\x)| \leq \Md$ \\
         $\Ld$ & $\quad |\dc(\x) - \dc(\y)| \leq \Ld\|\x - \y\|$ \\
         \hline
  \end{tabular}
\end{center}
}
\end{table}

\subsection{The Algorithm}\label{algorithm}

The numerical scheme we shall use to approximate \eqref{ContMom}-\eqref{ContInterp} is an Euler implicit-explicit scheme, with an implicit viscosity term and explicit advection and forcing terms. Let the fluid mesh $\Grid = \{(i\dx,j\dx):0\leq i,j \leq N-1\}$ be the set of Cartesian coordinates on a grid with gridwidth $\dx$, and let $\dt$ be the timestep size. The sequences of gridfunctions $\ucb^{n}(\x)$ and $\pc^{n}(\x)$ represent the fluid velocity and pressure respectively, evaluated at the coordinate $\x\in \Grid$, and a time $n\dt$. The sequence $\Xcb^{n}$ represents the position of the immersed particle at time $n\dt$. For a given initial condition, and a choice of $\dx$ and $\dt$, we can generate a sequence of gridfunctions which satisfy

\begin{align}
\phantom{\rho\left(\frac{\ucb^{n+1}-\ucb^{n}}{\dt} + (\ucb^{n}\cdot\D)\ucb^{n}\right)}
&\begin{aligned}
    \mathllap{\rho\left(\frac{\ucb^{n+1}-\ucb^{n}}{\dt} + (\ucb^{n}\cdot\D)\ucb^{n}\right)} &= -\D \pc^{n+1} \\
    &\qquad + \mu(\Dp\cdot\Dm)\ucb^{n+1} + \F(\Xcb^{n})\dc(\x-\Xcb^{n}) \label{DiscMom}
 \end{aligned} \\
\D \cdot \ucb^{n+1} &= 0 \label{DiscIncomp} \\
\frac{\Xcb^{n+1} - \Xcb^{n}}{\dt} &= \sum_{\x\in \Grid}\ucb^{n}(\x)\dc(\x-\Xcb^{n})\dx^2\label{DiscIBP}
\end{align}
Where $\D,\Dp$ and $\Dm$ are centred, forward, and backward difference approximations to $\nabla$ respectively. $\mathbf{D}\cdot$ refers to the corresponding discrete divergence. We note that $\D\cdot$ is the adjoint of $-\D$, that $\Dp\cdot$ is the adjoint of $-\Dm$, and vice versa. From these equations we obtain the following algorithm

\begin{algorithm}
\caption{Immersed Boundary Method}
\label{alg:thy}
\begin{algorithmic}
\While{$n < T/\dt$}
\State{Compute the term $\wcb^{n} = \ucb^{n}-\dt\ucb^{n}\cdot\D\ucb^{n} + \frac{\dt}{\rho}\F(\Xcb^{n})\dc(\x-\Xcb^{n})$}
\State{Solve $\begin{cases}
    \left(I-\dt\frac{\mu}{\rho}\Dp\cdot\Dm\right)\ucb^{n+1} = \wcb^{n} - \frac{\dt}{\rho}\D \pc^{n+1} \\
    \D \cdot \ucb^{n+1} = 0
\end{cases}$ for $\ucb^{n+1}$}
\State{Calculate $\Xcb^{n+1} = \Xcb^{n} + \dt \sum_{\x\in \Grid}\ucb^{n}(\x)\dc(\x-\Xcb^{n})\dx^2$}
\State{$n \leftarrow n+1$}
\EndWhile \\
\Return $\ucb^{n},\Xcb^{n}$
\end{algorithmic}
\end{algorithm}

Note that the solve for $\ucb^{n+1}$ is well defined. This can be seen by applying $\D\cdot$ to the momentum equation; since $\D,\Dp$, and $\Dm$ are commutative, we are left with
\begin{equation}
    \frac{\dt}{\rho}\D\cdot\D \pc^{n+1} = \D\cdot\wcb^{n}
\end{equation}
As the kernel of $\D\cdot\D$ (whose dimension is $1$ or $4$ depending on whether $N$ is odd or even) is equal\footnote{The fact that both kernels are equal follows from the adjointedness of $\D\cdot$ and $-\D$.} to the kernel of $\D$, there is a unique $\D\pc^{n+1}$ that is compatible the two equations of the solve step. Once the uniqueness of $\D\pc^{n+1}$ is established, what remains to be shown is that the operator $\Dp\cdot\Dm$ is semi-positive definite, which is true since $\Dp\cdot$ is the adjoint of $\Dm$.

\subsection{Convergence}

For the discrete and continuous systems described above, we prove the following

\begin{theorem}[Convergence]\label{thm:conv}
  Let $\ueb,\pe,\Xeb$ satisfy equations \eqref{ContMom}-\eqref{ContInterp}, along with the regularity assumptions specified above. Let $\ucb^{n},\pc^{n},\Xcb^{n}$ satisfy \eqref{DiscMom}-\eqref{DiscIBP}. Define the errors $\uEb^{n}=\ucb^{n}-\ueb^{n}$ and $\XEb^{n}=\Xcb^{n}-\Xeb^{n}$, where $\ueb^{n}$, $\Xeb^{n}$ are simply the quantities $\ueb(\x,n\dt)$,$\Xeb(n\dt)$ respectively for $\x\in \Grid$. Suppose we choose  $\dx^{2} \propto \dt$. Then we have\footnote{When applied to gridfunctions, $\|\cdot\|$ refers to the averaged 2 norm, i.e. $\|\uEb\| = \frac{1}{\length^2}\left(\sum_{\x\in \Grid}\sum_{\alpha=1}^{2}\uE^2_{\alpha}(\x) \dx^2\right)^{1/2}$. When applied to vectors like $\XEb$, $\|\cdot\|$ is simply the Euclidean distance}
\begin{equation}
\max_{0\leq n \leq \frac{T}{\dt}} \|\uEb^{n}\| = O(\dt) \quad \text{and} \quad \max_{0\leq n \leq \frac{T}{\dt}} \|\XEb^{n}\| = O(\dt)
\end{equation}
\end{theorem}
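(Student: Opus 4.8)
The plan is to follow the Lax--Richtmyer route---consistency plus stability implies convergence---adapted to this nonlinear, coupled problem by a bootstrap argument on the size of the error. First I would substitute the exact solution $\ueb,\pe,\Xeb$ (restricted to $\Grid$ and to the times $n\dt$) into \eqref{DiscMom}--\eqref{DiscIBP}, producing truncation residuals $\ures^n$ (momentum), $\pres^n=\D\cdot\ueb^n$ (incompressibility) and $\Xres^n$ (particle law). By the regularity assumption, the assumed smoothness of $\dc$, and the choice $\dx^2\propto\dt$, the consistency lemma gives $\|\ures^n\|,\|\pres^n\|,\|\Xres^n\|=O(\dt)$: the Euler time differencing contributes $O(\dt)$, the centred spatial operators and the quadrature in \eqref{DiscIBP} contribute $O(\dx^2)$, and---importantly---the forcing term is literally the same expression in the two formulations (substituting the exact solution turns $\F(\Xcb^n)\dc(\x-\Xcb^n)$ into $\feb(\x,n\dt)$ exactly), so it contributes no residual at all. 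Subtracting \eqref{DiscMom}--\eqref{DiscIBP} from the exact equations then gives error equations for $\uEb^n$ and $\XEb^n$ in which the advective nonlinearity becomes $(\uEb^n\cdot\D)\ueb^n+(\ueb^n\cdot\D)\uEb^n+(\uEb^n\cdot\D)\uEb^n$ and the forcing difference becomes $[\F(\Xcb^n)-\F(\Xeb^n)]\dc(\x-\Xcb^n)+\F(\Xeb^n)[\dc(\x-\Xcb^n)-\dc(\x-\Xeb^n)]$, whose $L^2(\Grid)$ norm is $O(\|\XEb^n\|)$ by Table~\ref{tbl:constants}.

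The second step is to remove the pressure. Using the discrete Helmholtz decomposition lemma I would write $\uEb^n=\usEb^n+\D\upE^n$ with $\D\cdot\usEb^n=0$ and $\D\cdot\D\,\upE^n=\pres^n$; since the relevant discrete operators $(\D\cdot\D)^{\dagger}$-composed-with-difference-quotients are bounded uniformly in $\dx$, the lemma gives $\D\upE^n$ together with its difference quotients of size $O(\dx^2)$, so it suffices to estimate the solenoidal part $\usEb^n$. Taking the $L^2(\Grid)$ inner product of the momentum error equation with $\usEb^{n+1}$, the gradient term $\D\pE^{n+1}$ drops by orthogonality, and since $\D,\Dp,\Dm$ commute the operator $\Dp\cdot\Dm$ preserves the solenoidal/gradient splitting, so the implicit viscous term contributes a favourably signed dissipation $\tfrac{\viscosity\dt}{\density}\|\Dm\usEb^{n+1}\|^2$; the term $\density\langle\usEb^n,\usEb^{n+1}\rangle$ telescopes in the usual way. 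The forcing difference and $\ures^n$ contribute at most $C\,\dt\,\|\XEb^n\|\,\|\usEb^{n+1}\|$ and $C\,\dt^2\,\|\usEb^{n+1}\|$, and of the three advective pieces $(\uEb^n\cdot\D)\ueb^n$ is harmless because $|\D\ueb^n|\le 2\MDu$, while $(\ueb^n\cdot\D)\uEb^n$ is controlled (splitting $\uEb^n$ as above) by $\Mu\,\dx^{-1}\|\usEb^n\|\,\|\usEb^{n+1}\|$ plus an $O(\dx^2)$ contribution from $\D\upE^n$; the first enters the energy identity with coefficient $O(\dt/\dx)=O(\dx)$ by the CFL relation---harmless, and in particular free of any hidden dependence on the bootstrap constant below. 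The genuinely dangerous piece is the quadratic term $(\uEb^n\cdot\D)\uEb^n$. For the particle I would estimate $\|\XEb^{n+1}\|$ directly from its error equation by discrete Cauchy--Schwarz, using $\|\dc(\x-\Xcb^n)\|_{L^2(\Grid)}=O(1)$ and the Lipschitz constants of Table~\ref{tbl:constants}, obtaining $\|\XEb^{n+1}\|\le(1+C\dt)\|\XEb^n\|+C\dt\,\|\uEb^n\|+C\dt^2$.

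To close the estimate I would induct on the time step: suppose $\|\uEb^k\|,\|\XEb^k\|\le K\dt$ for all $k\le m$, where $K$ depends only on $T$, $\viscosity/\density$ and the constants of Table~\ref{tbl:constants}. Under this hypothesis, and because $\dx^2\propto\dt$, one gets the $K$-free pointwise bound $\|\uEb^k\|_{\ell^\infty}\le\dx^{-1}K\dt=O(\dx)\le 1$ once $\dt$ is below a ($K$-dependent) threshold, so the quadratic term is bounded by $\|\uEb^n\|_{\ell^\infty}\,\|\Dm\uEb^n\|\,\|\usEb^{n+1}\|\le\|\Dm\uEb^n\|\,\|\usEb^{n+1}\|$; after Young's inequality the resulting $\|\Dm\uEb^n\|^2$ is absorbed into the viscous dissipation from the previous step (the discrepancy between $\Dm\uEb^n$ and $\Dm\usEb^n$ being handled by the $O(\dx^2)$ bound on the difference quotients of $\D\upE^n$, whose contribution, multiplied by $\dt$ and summed over $O(T/\dt)$ steps, is $O(\dx^4)=O(\dt^2)$). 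This keeps the stability constant independent of $K$. Assembling the velocity and particle estimates for $E^n:=\|\usEb^n\|^2+\|\XEb^n\|^2$ yields $E^{n+1}\le(1+C_\ast\dt)E^n+C_\ast\dt^3$ with $C_\ast$ independent of $\dt$ and of $K$; discrete Grönwall then gives $E^n\le e^{C_\ast T}(E^0+C_\ast T\dt^2)=O(\dt^2)$, and taking $K$ at least $\sqrt{2e^{C_\ast T}C_\ast T}$ (with $E^0=O(\dt^2)$ from the initialization) closes the induction. Since $\|\uEb^n\|\le\|\usEb^n\|+\|\D\upE^n\|=\|\usEb^n\|+O(\dx^2)$, the claimed rates $O(\dt)$ follow.

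I expect the quadratic advection term to be the main obstacle. A priori there is no control on the discrete velocity $\ucb^n$ in any norm strong enough to absorb $(\uEb^n\cdot\D)\uEb^n$, so one cannot directly estimate the error---yet controlling that term is precisely what one needs in order to prove the error is small. The bootstrap breaks this circularity, but only because (i) in the co-dimension-2 setting $\dc$ is mollified at the level of the continuous model, so there is no singular forcing and every residual is honestly $O(\dt)$, and (ii) the parabolic CFL restriction $\dx^2\propto\dt$ makes both the inverse-inequality losses ($\dt/\dx=O(\dx)$) and, via the induction hypothesis, the pointwise error ($\|\uEb^n\|_{\ell^\infty}=O(\dx)$) of higher order. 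The subtle point inside the bootstrap is to keep the Grönwall constant independent of the bootstrap constant $K$, which is why the nonlinear contributions must be absorbed into the viscous dissipation rather than treated as ordinary perturbations.
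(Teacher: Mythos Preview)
Your overall architecture---consistency, discrete Helmholtz decomposition to eliminate the pressure, energy estimate against $\usEb^{n+1}$, and a bootstrap to close the nonlinearity---matches the paper's. The gap is in your treatment of $(\ueb^{n}\cdot\D)\uEb^{n}$. You bound it via the inverse inequality as $\Mu\,\dx^{-1}\|\usEb^{n}\|\,\|\usEb^{n+1}\|$ and call the resulting coefficient $O(\dt/\dx)=O(\dx)$ ``harmless''. It is not: discrete Gr\"onwall iterates the growth factor over $T/\dt\propto\dx^{-2}$ steps, and $(1+C\dx)^{C'/\dx^{2}}\to\infty$ as $\dx\to0$. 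For the argument to close, the per-step amplification must be $1+O(\dt)$, not $1+O(\dx)$.

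The fix is the one you already use on the quadratic piece: bound $\|(\ueb^{n}\cdot\D)\uEb^{n}\|\le\sqrt{2}\,\Mu\bigl(\sum_{\beta}\|D^{-}_{\beta}\uEb^{n}\|^{2}\bigr)^{1/2}$ directly (via $\|D^{0}_{\beta}v\|\le\|D^{-}_{\beta}v\|$, with no inverse inequality), apply Young against $\|\usEb^{n+1}\|$, and absorb the resulting $\dt\sum_{\beta}\|D^{-}_{\beta}\uEb^{n}\|^{2}$ into the viscous dissipation from step $n$. This is precisely what the paper does: it keeps the advection as the single term $(\ucb^{n}\cdot\D)\uEb^{n}$, bounds $|\uc^{n}_{\alpha}|\le 2\Mu$ by the bootstrap, and---crucially---defines the tracked energy as $A^{n}=\tfrac{1}{2}\|\usEb^{n}\|^{2}+\tfrac{\mu\dt}{\rho}\sum_{\alpha}\|D^{-}_{\alpha}\usEb^{n}\|^{2}$, so that the dissipation is already present on the right-hand side at step $n$. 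Your $E^{n}=\|\usEb^{n}\|^{2}+\|\XEb^{n}\|^{2}$ omits this term, so as written there is nothing to absorb into. Once you augment $E^{n}$ with the dissipation and treat $(\ueb^{n}\cdot\D)\uEb^{n}$ the same way as the quadratic piece, your argument and the paper's coincide.
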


This is equivalent to the definition of convergence used in \cite{lax2005survey}. As in Lax and Richtmyer's paper, proving convergence will involve establishing both consistency and stability of the scheme. For the consistency portion, we define the residues $\ures^{n},\pres^{n}$ and $\Xres^{n}$:

\begin{align}
\ures^{n}&=\rho\left(\frac{\ueb^{n+1}-\ueb^{n}}{\dt} + (\ueb^{n}\cdot\D)\ueb^{n}\right) + \D \pe^{n+1} \label{ResMom}\\
           &\qquad -\mu(\Dp\cdot\Dm)\ueb^{n+1} - \F(\Xeb^{n})\dc(\x-\Xeb^{n}) \notag\\
\pres^{n+1}  &= \D \cdot \ueb^{n+1} \label{ResIncomp}\\
\Xres^{n}  &= \frac{\Xeb^{n+1} - \Xeb^{n}}{\dt} - \sum_{\x\in \Grid}\ueb^{n}(\x)\dc(\x-\Xeb^{n})\dx^2 \label{ResIBP}
\end{align}

and claim they converge uniformly to 0.

\begin{lemma}[Consistency] \label{Consistency}
The residues $\ures^{n},\pres^{n}$ and $\Xres^{n}$ converge to $0$ uniformly for all $n$. More specifically, we find
\begin{align}
    \|\ures^{n}\| &= O(\dt) & \|\pres^{n}\| &= O(h^2 = \dt) \\
    \|D^{-}_{\alpha}\pres^{n}\| &= O(h^2 = \dt) & \|\Xres\| &= O(\dt)
\end{align}
\end{lemma}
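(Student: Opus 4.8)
The plan is to substitute the exact solution $\ueb,\pe,\Xeb$ into each discrete equation and Taylor-expand every finite-difference operator around the grid point $(\x,n\dt)$, collecting the leading error terms and bounding them using the fourth-order regularity assumed in the Assumption. Concretely, I would treat the three residues \eqref{ResMom}, \eqref{ResIncomp}, \eqref{ResIBP} separately.

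For $\ures^{n}$ (the momentum residue): the time-difference quotient $\frac{\ueb^{n+1}-\ueb^{n}}{\dt}$ equals $\frac{\partial\ueb}{\partial t}(\x,n\dt)$ up to an $O(\dt)$ term with constant controlled by $\sup|\partial_{tt}\ueb|$; the centred difference $\D$ applied to $\ueb$ or to $\pe^{n+1}$ equals $\nabla$ up to $O(\dx^{2})$ with constant controlled by the third derivatives; and the discrete Laplacian $\Dp\cdot\Dm$ applied to $\ueb^{n+1}$ equals $\nabla^{2}\ueb$ up to $O(\dx^{2})$ with constant controlled by the fourth derivatives. The advection term $(\ueb^{n}\cdot\D)\ueb^{n}$ matches $(\ueb\cdot\nabla)\ueb$ up to $O(\dx^{2})$ (here the product structure is harmless since $\ueb$ is bounded by $\Mu$). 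The forcing term $\F(\Xeb^{n})\dc(\x-\Xeb^{n})$ is \emph{exact} — it appears identically in \eqref{ContFD} and \eqref{DiscMom} evaluated at the same $\Xeb^{n}$ — so it contributes nothing to the residue; this is the point where co-dimension 2 (a genuinely smooth $\dc$, no refinement of its width) makes life easy. Adding these, and substituting the exact PDE \eqref{ContMom} to cancel the $O(1)$ part, leaves $\ures^{n}=O(\dt)+O(\dx^{2})=O(\dt)$ under the scaling $\dx^{2}\propto\dt$. For $\pres^{n+1}=\D\cdot\ueb^{n+1}$: since $\nabla\cdot\ueb\equiv0$ exactly, the centred-difference divergence of $\ueb^{n+1}$ is purely the truncation error $O(\dx^{2})=O(\dt)$, with constant controlled by third derivatives of $\ueb$. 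The bound on $\|D^{-}_{\alpha}\pres^{n}\|$ follows the same way after noting $D^{-}_{\alpha}$ applied to the already-$O(\dx^{2})$ quantity $\D\cdot\ueb^{n+1}$ — one re-expands $D^{-}_{\alpha}\D\cdot\ueb^{n+1}$ directly as a finite-difference stencil of $\nabla(\nabla\cdot\ueb)=0$ plus $O(\dx^{2})$ truncation; this avoids the spurious loss of a power of $\dx$ that a crude "divide by $\dx$" argument would give.

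For $\Xres^{n}$ (the interpolation residue): the time-difference quotient gives $\frac{d\Xeb}{dt}(n\dt)$ up to $O(\dt)$. The remaining task is to compare the Riemann sum $\sum_{\x\in\Grid}\ueb^{n}(\x)\dc(\x-\Xeb^{n})\dx^{2}$ with the integral $\int_{\Dom}\ueb(\x,n\dt)\dc(\x-\Xeb^{n})\,d\x$. Since $\Dom$ is periodic and the integrand $\ueb(\cdot,n\dt)\dc(\cdot-\Xeb^{n})$ is $C^{4}$ (product of the smooth $\ueb$ and the smooth, compactly-supported-in-a-period $\dc$), the periodic trapezoidal/midpoint rule on a uniform grid has quadrature error $O(\dx^{2})$ — with constant controlled by second derivatives of the integrand, hence by $\Md,\Ld$, $\Mu$, $\MDu$ and higher bounds. (If one only wants $O(\dx)$ here that already suffices, but $O(\dx^{2})$ is clean.) Substituting \eqref{ContInterp} to cancel the $O(1)$ part leaves $\Xres^{n}=O(\dt)+O(\dx^{2})=O(\dt)$.

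The only mildly delicate point — and the one I would flag as the "main obstacle," though it is routine rather than deep — is the $\|D^{-}_{\alpha}\pres^{n}\|$ estimate: one must resist bounding it as $\frac{1}{\dx}\|\pres^{n}\|$ (which would give only $O(\dx)$) and instead commute the difference operators, expanding $D^{-}_{\alpha}(\D\cdot\ueb^{n+1})$ as a single finite-difference approximation to $\partial_{\alpha}(\nabla\cdot\ueb^{n+1})\equiv0$, so that the $\dx^{2}$ is recovered honestly from a Taylor expansion using the fourth derivatives of $\ueb$. Everything else is bookkeeping: keep track of which derivative bound controls which truncation constant, invoke the Assumption to make all these constants finite and uniform in $n\le T/\dt$, and collect powers of $\dt$ and $\dx$ under $\dx^{2}\propto\dt$.
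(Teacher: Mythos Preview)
Your proposal is correct and follows essentially the same route as the paper: term-by-term Taylor expansion of the finite-difference operators, cancellation via the continuous equations, and the midpoint/trapezoidal quadrature argument for $\Xres^{n}$. In particular, you correctly identify the one delicate point---recovering the full $O(h^{2})$ for $\|D^{-}_{\alpha}\pres^{n}\|$ by expanding $D^{-}_{\alpha}\D\cdot\ueb^{n+1}$ as a single stencil approximating $\partial_{\alpha}(\nabla\cdot\ueb)=0$ rather than crudely dividing by $h$---which is exactly how the paper handles it.
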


The proof of this lemma involves repeated application of Taylor's theorem to equations \eqref{ResMom}-\eqref{ResIBP}; see \ref{sec:lem} for the details. The remainder of this paper constructs a stability bound of the form

\begin{align}
\rm{error}(n) &\leq \frac{1}{\Lambda}\left(e^{T\Lambda}-1\right)\max_{0\leq i\leq T/\dt}\rm{residues(i)}\label{StabBoundForm}
\end{align}

For some constant $\Lambda$. When combined with consistency, this inequality provides a uniform bound on the error. Before delving into the proof of the theorem proper, we first make some remarks about the error. By subtracting equations \eqref{ResMom}-\eqref{ResIBP} from \eqref{DiscMom}-\eqref{DiscIBP}, we obtain an iterative relation for the error

\begin{align}
\begin{split}
\frac{\uEb^{n+1}-\uEb^{n}}{\dt} &= - (\ucb^{n}\cdot\D)\ucb^{n} + (\ueb^{n}\cdot\D)\ueb^{n} -\frac{1}{\rho}\D \pE^{n+1} + \frac{\mu}{\rho}(\Dp\cdot\Dm)\uEb^{n+1} \\
&\qquad \qquad +- \frac{1}{\rho}\ures^{n} +\frac{1}{\rho}\F(\Xcb^{n})\dc(\x-\Xcb^{n}) - \frac{1}{\rho}\F(\Xeb^{n})\dc(\x-\Xeb^{n}) 
\end{split} \label{ErrMom}\\
 \D\cdot\uEb^{n+1} &= -\pres^{n+1} \label{ErrIncomp} \\
 \frac{\XEb^{n+1}-\XEb^{n}}{\dt} &= \sum_{\x\in \Grid} \ucb^{n}(\x)\dc(\x-\Xcb^{n})\dx^2 - \ueb^{n}(\x)\dc(\x-\Xeb^{n})\dx^2 - \Xres^{n} \label{ErrIBP}
\end{align}

Where $\pE^{n} = \pc^{n} - \pe^{n}$. By adding and subtracting the terms

\begin{equation*}
(\ucb^{n} \cdot \D)\ueb^{n} \quad\text{and} \quad \F(\Xeb^{n})\dc(\x-\Xcb^{n})
\end{equation*}

we may rewrite \eqref{ErrMom} as

\begin{equation}
\begin{split}
\frac{\uEb^{n+1}-\uEb^{n}}{\dt} &= -(\ucb^{n}\cdot\D)\uEb^{n} - (\uEb^{n}\cdot\D)\ueb^{n} - \frac{1}{\rho}\D q^{n+1} + \frac{\mu}{\rho}(\Dp\cdot\Dm)\uEb^{n+1} \\
&\qquad - \frac{1}{\rho}\ures^{n}  + \frac{1}{\rho}\F(\Xeb^{n})(\dc(\x-\Xcb^{n})-\dc(\x-\Xeb^{n})) \\
&\qquad + \frac{1}{\rho}(\F(\Xcb^{n})-\F(\Xeb^{n}))\dc(\x-\Xcb^{n})\label{ErrMom1}
\end{split}
\end{equation}

Similarly, we may recast \eqref{ErrIBP} as

\begin{equation}
\frac{\XEb^{n+1}-\XEb^{n}}{\dt} = \sum_{\x\in \Grid} \uEb^{n}(\x)\dc(\x-\Xcb^{n})h^2 + \ueb^{n}(\x)(\dc(\x-\Xcb^{n}) - \dc(\x-\Xeb^{n}))h^2 - \Xres^{n} \label{ErrIBP1}
\end{equation}

We can further process these equations by decomposing the fluid error $\uEb$ into discretely divergence-free and discretely curl-free components:

\begin{lemma}[Discrete Hemholtz Decomposition] \label{Decomp}
$\uEb^{n}$ may be decomposed into the sum
\begin{equation}
	\uEb^{n} = \usEb^{n} + \D\psi^{n}
\end{equation}
Where 
\begin{align}
\D\cdot\usEb^{n} = 0 \\
\upE^{n} \perp \ker(\D)
\end{align}
Where the latter condition ensures $\upE$ is unique. As an immediate consequence, we have $\D\upE^{n} \perp \ker(\D\cdot)$
\end{lemma}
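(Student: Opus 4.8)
The plan is to prove this by invoking the orthogonal-complement decomposition of the finite-dimensional space of grid vector fields, using the adjointness relations among the discrete operators that were already noted in the discussion of the solve step. Let $V$ denote the (finite-dimensional, $2N^2$-dimensional) real vector space of grid vector fields on $\Grid$, equipped with the averaged inner product $\langle \cdot,\cdot\rangle$ that induces the norm $\|\cdot\|$ used throughout. Let $W$ denote the space of scalar grid functions with the analogous inner product. First I would recall the adjointness facts stated in Section~\ref{algorithm}: $\D\cdot$ is the (negative) adjoint of $\D$, i.e. $\langle \D p, \uEb\rangle = -\langle p, \D\cdot \uEb\rangle$ for all $p\in W$, $\uEb\in V$. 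From this it follows that $\range(\D)$ and $\ker(\D\cdot)$ are orthogonal complements in $V$: indeed $\uEb \perp \range(\D)$ iff $\langle \D p,\uEb\rangle = 0$ for all $p$ iff $\langle p, \D\cdot\uEb\rangle = 0$ for all $p$ iff $\D\cdot\uEb = 0$.

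The key steps, in order: (i) Establish the orthogonal direct sum $V = \range(\D) \oplus \ker(\D\cdot)$ from the adjointness relation above, which is just the standard linear-algebra fact $\range(A) \oplus \ker(A^{*}) = V$ applied to $A = \D$. (ii) Given $\uEb^{n}\in V$, let $\usEb^{n}$ be its orthogonal projection onto $\ker(\D\cdot)$ and let $\D\psi^{n}$ be its orthogonal projection onto $\range(\D)$; this immediately yields the decomposition $\uEb^{n} = \usEb^{n} + \D\psi^{n}$ with $\D\cdot\usEb^{n} = 0$. (iii) To pin down $\psi^{n}$ uniquely, note that two preimages of the same element of $\range(\D)$ under $\D$ differ by an element of $\ker(\D)$; imposing $\psi^{n} \perp \ker(\D)$ selects the unique representative in $\ker(\D)^{\perp} = \range(\D\cdot)$ (again by adjointness, now for the operator $\D\cdot$ with adjoint $-\D$). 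Since $\D$ restricted to $\ker(\D)^{\perp}$ is injective onto $\range(\D)$, this $\psi^{n}$ exists and is unique. (iv) Finally, $\D\psi^{n}\in\range(\D)$, and by step (i) $\range(\D) \perp \ker(\D\cdot)$, so $\D\psi^{n}\perp\ker(\D\cdot)$, which is the stated consequence (equivalently, $\D\psi^{n}$ and $\usEb^{n}$ are orthogonal, so this is really the Pythagorean splitting $\|\uEb^{n}\|^2 = \|\usEb^{n}\|^2 + \|\D\psi^{n}\|^2$, which will be convenient later).

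I do not expect a genuine obstacle here: the statement is finite-dimensional linear algebra once the adjointness of the discrete operators is in hand, and that adjointness is asserted in the text (it rests on summation-by-parts with periodic boundary conditions, with no boundary terms). The only point requiring a little care is the uniqueness clause for $\psi^{n}$ and the precise identification $\ker(\D)^{\perp} = \range(\D\cdot)$ — one must make sure the inner products used for grid vector fields and for scalar grid functions are the consistent ones under which $\D$ and $-\D\cdot$ are genuinely mutual adjoints (the common $\dx^2/\length^2$ weighting cancels appropriately). I would also remark, for use in the stability argument, that the decomposition commutes with the discrete Laplacian $\Dp\cdot\Dm$ and with time-stepping in the obvious way, since all these operators are built from $\D,\Dp,\Dm$, which commute with one another on the periodic grid.
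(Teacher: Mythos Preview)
Your proposal is correct and follows essentially the same approach as the paper: establish that $-\D$ and $\D\cdot$ are mutual adjoints (the paper writes out the summation-by-parts explicitly, while you cite it), deduce the orthogonal splitting $V=\range(\D)\oplus\ker(\D\cdot)$, and fix $\psi^{n}$ uniquely by requiring $\psi^{n}\perp\ker(\D)$. If anything, your treatment of the uniqueness clause and of the consequence $\D\psi^{n}\perp\ker(\D\cdot)$ is slightly more explicit than the paper's.
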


To prove this, it is sufficient to demonstrate that $-\D$ and $\D\cdot$ are adjoint. The proof can be found in \ref{sec:lem}. Applied to the linear terms in equations \eqref{ErrMom1} and \eqref{ResIncomp}, the above lemma gives us

\begin{equation}
\begin{split}
\frac{\usEb^{n+1} - \usEb^{n}}{\dt} &+ \frac{\D\psi^{n+1}-\D\psi^{n}}{\dt} = -(\ucb^{n}\cdot\D)\uEb^{n} - (\uEb^{n}\cdot\D)\ueb^{n} \\
&\qquad  - \frac{1}{\rho}\D \pE^{n+1}  - \frac{1}{\rho}\ures^{n} + \frac{1}{\rho}(\F(\Xcb^{n})-\F(\Xeb^{n}))\dc(\x-\Xcb^{n}) \\
&\qquad + \frac{1}{\rho}\F(\Xeb^{n})(\dc(\x-\Xcb^{n})-\dc(\x-\Xeb^{n})) \\
&\qquad + \frac{\mu}{\rho}(\Dp\cdot\Dm)(\usEb^{n+1} + \D\upE^{n+1})
\end{split} \label{ErrMom2}
\end{equation}
\begin{equation}
     \pres^{n+1} = \D\cdot\D \upE^{n+1}\label{ErrIncomp1}
\end{equation}

The last observation we make is that the component $\D\upE^{n}$---i.e. the deviation in the error from the incompressibility constraint subspace---can be bounded by the residue $\pres^{n}$. This is significant, as it ensures the error also approximately satisfies the discretely divergence-free condition. This will simplify our problem, as we will only need to prove a stability bound on the discrete divergence-free field $\usEb^{n}$, and the bound for $\uEb^{n}$ will follow. We summarize this claim in the lemma below.

\begin{lemma}[Stability bound on deviation from constraint subspace] \label{ConstraintBound}
Let $\upE^{n+1}$ satisfy \eqref{ErrIncomp1}. The gridfunction $\D\upE^{n}$ satisfies the bound
\begin{equation}
\|\D\upE^{n}\|^2 \leq \frac{1}{\lambda}\|\pres^{n}\|^{2}
\end{equation}
Where $\lambda$ is a lower bound, uniform in $\dx$, to the non-zero eigenvalues of $-\D\cdot\D$.
\end{lemma}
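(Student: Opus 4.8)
The plan is to read the bound directly off the spectrum of the symmetric operator $-\D\cdot\D$. First I would record the spectral facts that make this work. Because $\D\cdot$ is the adjoint of $-\D$, every scalar gridfunction $\phi$ satisfies the identity $\langle -\D\cdot\D\,\phi,\phi\rangle = \|\D\phi\|^{2}\ge 0$, so $-\D\cdot\D$ is symmetric and positive semidefinite, and the same identity shows $\ker(-\D\cdot\D)=\ker(\D)$ (the equality already used in Section~\ref{algorithm}). Hence $-\D\cdot\D$ has an orthonormal eigenbasis $\{\phi_{j}\}$ with eigenvalues $\lambda_{j}\ge 0$, all the nonzero ones being $\ge\lambda$, and $\ker(\D)^{\perp}$ is exactly the span of the eigenvectors with $\lambda_{j}>0$, on which $-\D\cdot\D$ is symmetric positive definite.

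Next I would fix a time level and use the hypothesis \eqref{ErrIncomp1}, namely $\pres=\D\cdot\D\,\upE$, together with the normalization $\upE\perp\ker(\D)$ provided by Lemma~\ref{Decomp}. Expanding $\upE=\sum_{j}c_{j}\phi_{j}$, the orthogonality condition annihilates every component in $\ker(\D)=\ker(-\D\cdot\D)$, so the sum runs only over indices with $\lambda_{j}\ge\lambda$. From $\pres=\D\cdot\D\,\upE=-\sum_{j}\lambda_{j}c_{j}\phi_{j}$ we get $\|\pres\|^{2}=\sum_{j}\lambda_{j}^{2}c_{j}^{2}$, while the adjointness identity again gives $\|\D\upE\|^{2}=\langle -\D\cdot\D\,\upE,\upE\rangle=\sum_{j}\lambda_{j}c_{j}^{2}$. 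Since $1/\lambda_{j}\le 1/\lambda$ for every surviving index,
\[
\|\D\upE\|^{2}=\sum_{j}\lambda_{j}c_{j}^{2}=\sum_{j}\frac{1}{\lambda_{j}}\bigl(\lambda_{j}^{2}c_{j}^{2}\bigr)\le\frac{1}{\lambda}\sum_{j}\lambda_{j}^{2}c_{j}^{2}=\frac{1}{\lambda}\,\|\pres\|^{2},
\]
which is the asserted inequality; applying it at each time level (where \eqref{ErrIncomp1} holds) yields the statement for $\D\upE^{n}$. An equivalent coordinate-free route: $\|\D\upE\|^{2}=-\langle\upE,\pres\rangle\le\|\upE\|\,\|\pres\|$ by Cauchy--Schwarz, and $\|\upE\|\le\tfrac{1}{\lambda}\|(-\D\cdot\D)\upE\|=\tfrac{1}{\lambda}\|\pres\|$ because $\upE$ lies in the span of eigenvectors with eigenvalue at least $\lambda$; multiplying the two estimates gives the same conclusion.

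I do not expect a genuine obstacle here. The one point that needs care is that the eigenvalue lower bound $\lambda$ may be invoked only after projecting away from $\ker(\D)$, which is precisely why the normalization $\upE\perp\ker(\D)$ from Lemma~\ref{Decomp} is essential: a $\ker(\D)$ component of $\upE$ would be invisible to $\pres$, and only after discarding it does the eigenvalue bookkeeping above close. The sole quantitative input is the \emph{existence} of a bound $\lambda$ on the nonzero eigenvalues of $-\D\cdot\D$ that is uniform in $\dx$; this is part of the hypothesis of the lemma, and it is confirmed by a short Fourier computation on the periodic grid, the smallest nonzero eigenvalue of $-\D\cdot\D$ being of order $\dx^{-2}\sin^{2}(2\pi\dx/\length)$, which remains bounded away from $0$ as $\dx\to 0$.
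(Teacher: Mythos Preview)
Your proposal is correct and follows essentially the same approach as the paper: both exploit that $\upE^{n}\perp\ker(\D)=\ker(-\D\cdot\D)$ and read off the bound from the spectrum of $-\D\cdot\D$, which the paper diagonalizes explicitly via the discrete Fourier basis. Your main route (the one-line spectral inequality $\sum_j\lambda_jc_j^2\le\lambda^{-1}\sum_j\lambda_j^2c_j^2$) is slightly more direct than the paper's, which first applies Cauchy--Schwarz to get $\|\D\upE\|^{2}\le\|\upE\|\,\|\pres\|$ and then uses the Rayleigh-quotient bound $\lambda\|\upE\|^{2}\le\|\D\upE\|^{2}$ to control $\|\upE\|$; your ``coordinate-free'' alternative is essentially the paper's argument with a different (but equivalent) way of bounding $\|\upE\|$.
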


The crux of this lemma is demonstrated by analysing the spectrum of the discrete Laplacian $\D\cdot\D$. Since the boundary conditions are periodic, the matrix is circulant and can be diagonalised by the discrete Fourier transform. Having established the necessary context we move on to a proof of the main theorem

\section{Proof of the theorem}
\label{sec:proof}
The proof of the theorem will call upon an additional lemma, which is an iterative inequality that we use inductively to establish a stability bound in the form of \eqref{StabBoundForm}.

\begin{lemma}[Iterative inequality]\label{IterInequalities}
Assume $\usEb^{n}$ and $\XEb^{n}$ satisfy equations \eqref{ErrIBP1} and \eqref{ErrMom2}. Let $T_{0}$ be an arbitrary constant measured in units of time and suppose $|\uc^{n}_{\alpha}(\x)| \leq \Muc$. Define
\begin{align}
    A^{n} &:= \frac{1}{2}\|\usEb^{n}\|^2 + \frac{\dt\mu}{\rho}\sum_{\alpha=1}^{2}\|D^{-}_{\alpha}\usEb^{n}\|^2 \\
    B^{n}&:= \frac{1}{T_{0}^2}\|\XEb^{n}\|^2 \\
    C^{n}&:= \frac{1}{2T_{0}\lambda}\|\pres^n\|^2 + \frac{\mu}{\rho \lambda}\sum_{\alpha=1}^{2}\|D^{-}_{\alpha}\pres^{n}\|^2 + \frac{T_{0}}{2\rho^2}\|\ures^{n}\|^2 \\
    D^{n}&:=\frac{1}{T_{0}}\left(\frac{1}{\lambda}\|\pres^{n}\|^2 + \|\Xres^{n}\|^2 \right)
\end{align}
then
\begin{equation}
\left[
\begin{array}{c}
A^{n+1} \\
B^{n+1}
\end{array}
\right]
\leq
(\mathbf{I} + \LexpM^{n} \dt)
\left[
\begin{array}{c}
A^{n} \\
B^{n}
\end{array}
\right]
+
\dt
\left[
\begin{array}{c}
C^{n}\left(1+\dt k_{1}^{n}\right) \\
D^{n}(1 + \dt k_{2})
\end{array}
\right] \label{IterIneq}
\end{equation}
where the inequality holds for both lines of \eqref{IterIneq}, and $\LexpM^{n}$ is the $2\times2$  matrix
\begin{equation*}
    \left(
    \begin{array}{cc}
        \frac{1}{T_{0}}+(1+\frac{\dt}{T_{0}})k_{1}^{n}  & 
        \frac{1}{T_{0}}\\
        \frac{2}{T_{0}} & \frac{1}{T_{0}}+(1+\frac{\dt}{T_{0}})k_{2}
    \end{array}
    \right)
\end{equation*}
and
\begin{align*}
    k_{1}^{n} &= \left(4T_{0} \MDu^2 + \frac{\rho}{\mu}\Muc^2 + \frac{\LF \Md + \MF \Ld}{2\rho^2}T_{0}^{3} + \frac{1}{T_{0}} \right) \\
    k_{2} &= \left(2\Mu^2 L_{\delta_{c}}^2 L^4 T_{0} + L^4\frac{1}{T_{0}} \Md^2 + \frac{1}{T_{0}}\right)
\end{align*}
\end{lemma}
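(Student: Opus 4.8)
The plan is to establish the two scalar inequalities in \eqref{IterIneq} separately, by taking inner products of the error equations \eqref{ErrMom2} and \eqref{ErrIBP1} with appropriate test gridfunctions and carefully controlling each term with Young's inequality. For the first line (the $A^{n+1}$ bound), I would take the discrete $L^2$ inner product of \eqref{ErrMom2} with $\usEb^{n+1}$. Since $\usEb^{n+1}$ is discretely divergence-free, it is orthogonal (by Lemma~\ref{Decomp}) to both $\D\pE^{n+1}$ and to $(\D\psi^{n+1}-\D\psi^n)/\dt$ and to $(\Dp\cdot\Dm)\D\upE^{n+1}$, so those three terms drop out. The viscous term $\frac{\mu}{\rho}\langle(\Dp\cdot\Dm)\usEb^{n+1},\usEb^{n+1}\rangle = -\frac{\mu}{\rho}\sum_\alpha\|D^-_\alpha\usEb^{n+1}\|^2$ supplies the dissipation that builds $A^{n+1}$ from $\frac12\|\usEb^{n+1}\|^2$ once we use the algebraic identity $\langle \usEb^{n+1}-\usEb^n,\usEb^{n+1}\rangle = \frac12\|\usEb^{n+1}\|^2 - \frac12\|\usEb^n\|^2 + \frac12\|\usEb^{n+1}-\usEb^n\|^2$; the last (positive) term can either be dropped or, better, used to absorb part of the explicit terms (this is where a factor like $\rho/\mu\cdot\Muc^2$ in $k_1^n$ comes from, via an inverse-type estimate on the advection term). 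The advection terms $-\langle(\ucb^n\cdot\D)\uEb^n,\usEb^{n+1}\rangle$ and $-\langle(\uEb^n\cdot\D)\ueb^n,\usEb^{n+1}\rangle$ are handled using $|\uc^n_\alpha|\le\Muc$ and the gradient bound $\MDu$ respectively; the skew-symmetry-ish structure of the centered difference advection operator lets one trade the derivative off $\uEb^n$ at the cost of $\|\uEb^{n}\|\,\|\usEb^{n+1}\|$-type products, which after Young's inequality and the splitting $\uEb^n = \usEb^n + \D\psi^n$ (with $\|\D\psi^n\|^2 \le \frac1\lambda\|\pres^n\|^2$ from Lemma~\ref{ConstraintBound}) feed into $A^n$, $C^n$, and the residue terms. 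The forcing difference terms split via the Lipschitz constants $\LF,\Ld$ and bounds $\MF,\Md$ into a piece proportional to $\|\XEb^n\|$ (giving the off-diagonal $1/T_0$ entry after normalizing by $T_0$, i.e. contributing to $B^n$) and residue pieces; the $\ures^n$ term goes straight into $C^n$ by Cauchy--Schwarz and Young.

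For the second line (the $B^{n+1}$ bound), I would take the Euclidean inner product of \eqref{ErrIBP1} with $\XEb^{n+1}$ (or handle $\|\XEb^{n+1}\|^2 - \|\XEb^n\|^2$ directly). The term $\sum_{\x}\uEb^n(\x)\dc(\x-\Xcb^n)h^2$ is bounded in norm by $\|\uEb^n\|\,\|\dc(\cdot-\Xcb^n)\|_{L^2}$; here one uses $|\dc|\le\Md$ and the finite area $L^2$ of the (bounded) support/domain to get a constant like $L^4\Md^2$, and then splits $\|\uEb^n\|^2 \le 2\|\usEb^n\|^2 + 2\|\D\psi^n\|^2 \le 4 A^n\cdot(\text{const}) + \frac{2}{\lambda}\|\pres^n\|^2$, which explains both the $2/T_0$ off-diagonal entry (coupling to $A^n$) and the $\frac1\lambda\|\pres^n\|^2$ contribution to $D^n$. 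The term $\sum_{\x}\ueb^n(\x)(\dc(\x-\Xcb^n)-\dc(\x-\Xeb^n))h^2$ uses $|\ueb^n|\le\Mu$ and the Lipschitz bound $|\dc(\x-\Xcb^n)-\dc(\x-\Xeb^n)|\le\Ld\|\XEb^n\|$ to produce a term $\propto \Mu\Ld L^2\|\XEb^n\|$, which after Young's inequality and normalization by $T_0^2$ gives the diagonal entry $\frac{1}{T_0} + (1+\dt/T_0)k_2$ (the $2\Mu^2 L_{\delta_c}^2 L^4 T_0$ inside $k_2$). The $\Xres^n$ term goes into $D^n$.

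The main obstacle I anticipate is bookkeeping the explicit-scheme error terms at time $n+1$ tested against $\usEb^{n+1}$: because the scheme is IMEX, the advection and forcing appear at level $n$ while the test function is at level $n+1$, so every Young's inequality must be arranged so that the $\|\usEb^{n+1}\|^2$ (or $\sum_\alpha\|D^-_\alpha\usEb^{n+1}\|^2$) pieces it generates are \emph{small enough} to be absorbed either into the $\frac12\|\usEb^{n+1}\|^2$ sitting on the left or into the viscous dissipation $\frac{\dt\mu}{\rho}\sum_\alpha\|D^-_\alpha\usEb^{n+1}\|^2$ that we want to keep as part of $A^{n+1}$ — i.e., the coefficients must be chosen so that what's left on the left side is exactly $A^{n+1}$ with no leftover negative contribution, forcing the $O(\dt)$ correction factors $(1+\dt k_1^n)$ and $(1+\dt k_2)$ on the source terms. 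Getting the advection term's spatial-derivative estimate to close requires either a discrete Sobolev/inverse inequality or exploiting the summation-by-parts structure of $\D$ to move the derivative onto $\usEb^{n+1}$ and pay with $\|D^-_\alpha\usEb^{n+1}\|$ against the available $\sqrt{\dt\mu/\rho}$ — that matching of powers of $\dt$ (recall $\dx^2\propto\dt$) is the delicate point, and is precisely why the hypothesis $\dx^2\propto\dt$ is invoked. Once both scalar inequalities are in hand, assembling them into the claimed matrix form with $\LexpM^n$ is just collecting coefficients.
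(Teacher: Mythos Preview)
Your overall architecture---testing \eqref{ErrMom2} against $\usEb^{n+1}$, killing the gradient terms by orthogonality, and testing \eqref{ErrIBP1} against $\XEb^{n+1}$---matches the paper. The treatment of the forcing terms, the $B^{n+1}$ line, and the use of Lemma~\ref{ConstraintBound} to split $\|\uEb^n\|^2$ and $\|D^-_\alpha\uEb^n\|^2$ into $\usEb$-parts and $\pres$-parts are all correct.

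Where you go astray is the advection term $(\ucb^{n}\cdot\D)\uEb^{n}$. You propose to ``trade the derivative off $\uEb^{n}$'' by discrete skew-symmetry (summation by parts) so that it lands on $\usEb^{n+1}$ and can be absorbed into the viscous dissipation, and you flag this as the place where the hypothesis $\dx^{2}\propto\dt$ is needed. Neither of these is what happens, and neither is necessary. The paper simply estimates
\[
\|(\ucb^{n}\cdot\D)\uEb^{n}\|^{2}\;\le\;2\Muc^{2}\sum_{\beta}\|D^{-}_{\beta}\uEb^{n}\|^{2},
\]
keeping the derivative on $\uEb^{n}$ at level $n$. The whole point of defining $A^{n}$ with the extra term $\frac{\dt\mu}{\rho}\sum_{\alpha}\|D^{-}_{\alpha}\usEb^{n}\|^{2}$ is precisely so that, after the Helmholtz splitting $\|D^{-}_{\alpha}\uEb^{n}\|^{2}\le\|D^{-}_{\alpha}\usEb^{n}\|^{2}+\frac{1}{\lambda}\|D^{-}_{\alpha}\pres^{n}\|^{2}$, the first piece is already sitting inside $A^{n}$ and the second piece goes into $C^{n}$. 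The factor $\rho/\mu\cdot\Muc^{2}$ in $k_{1}^{n}$ then arises from the Cauchy--Schwarz pairing $\bigl(\sqrt{2\dt\rho/\mu}\,\Muc\bigr)\cdot\bigl(\sqrt{2\dt\mu/\rho}\,(\sum_{\beta}\|D^{-}_{\beta}\uEb^{n}\|^{2})^{1/2}\bigr)$ in the paper's dot-product rearrangement. No integration by parts, no inverse inequality, and no CFL-type relation $\dx^{2}\propto\dt$ is used anywhere in this lemma; that relation enters only in Lemma~\ref{Consistency} and in the pointwise step of the main theorem.

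Your proposed route via skew-symmetry would run into trouble anyway: $(\ucb^{n}\cdot\D)$ is not skew-adjoint because $\ucb^{n}$ is only discretely divergence-free with respect to $\D$, not $\Dm$, and after moving the derivative you would be trying to absorb an $O(1)$ coefficient times $\dt\|D^{-}_{\alpha}\usEb^{n+1}\|^{2}$ into a viscous term of the same size, which at best halves the dissipation and prevents you from recovering $A^{n+1}$ with the stated constants.
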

The constant $T_{0}$ is introduced to maintain dimensional consistency throughout the proof, and plays no significant role in the veracity of the theorem. The result of this lemma is obtained through repeated applications of Cauchy-Schwarz and other inequalities to the iterative error equations \eqref{ErrIBP1}, \eqref{ErrMom2}, and \eqref{ErrIncomp1} (see section \ref{sec:lem} for the demonstration). It is important to note also that $k_{1}^{n}$ and by extension $\LexpM^{n}$ depend implicitly on $n$ through the presence of the bound $\Muc$. In order to obtain a bound on the growth of the error, we must find a bound for the computed solution. On the other hand, we can bound the growth of the computed solution if we have a bound on the error (since the error tells us the deviation of the computed solution from the bounded exact solution). This may initially seem to be a circular argument, however, a second glance at \eqref{IterIneq} tells that this is not the case: we only require a bound on $\ucb^{n}$, to obtain a bound on $\usEb^{n+1}$---which may in turn be used to control $\uEb^{n+1}$. Furthermore, since $\uEb^{n+1}$ tells us how far $\ucb^{n+1}$ is from $\ueb^{n+1}$, and since $\ueb$ is---by assumption---uniformly bounded, we can choose $\dt$ small enough to maintain a bound on $\ucb^{n+1}$. This allows us to form an inductive argument which proves both convergence and boundedness of $\ucb^{n}$ simultaneously. We begin with a corollary of the above lemma

\begin{corollary}\label{Cor:Iter}
Define $\Lexp^{n}$ to be the 1-norm of $\LexpM^{n}$
\begin{equation*}
\Lexp^{n}:= \max\{\Lexp_{11}^{n} + \Lexp_{21}^{n},\Lexp_{12}^{n} + \Lambda_{22}^{n}\}
\end{equation*}
then
\begin{equation}
    A^{n+1} + B^{n+1} \leq (1+ \dt\Lexp^{n})(A^{n}+B^{n}) + \dt E^{n}\label{IterIneq1}
\end{equation}
where 
\begin{equation}
E^{n} =C^{n}\left(1+\dt k_{1}^{n}\right) + D^{n}(1 + \dt k_{2})
\end{equation}
\end{corollary}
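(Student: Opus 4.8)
The plan is to derive \eqref{IterIneq1} directly from the matrix inequality \eqref{IterIneq} by summing its two component rows. Writing out the two scalar inequalities contained in \eqref{IterIneq}, we have
\begin{align*}
A^{n+1} &\leq A^{n} + \dt\left(\Lexp_{11}^{n}A^{n} + \Lexp_{12}^{n}B^{n}\right) + \dt\, C^{n}(1+\dt k_{1}^{n}), \\
B^{n+1} &\leq B^{n} + \dt\left(\Lexp_{21}^{n}A^{n} + \Lexp_{22}^{n}B^{n}\right) + \dt\, D^{n}(1+\dt k_{2}).
\end{align*}
Adding these two lines and grouping the terms multiplying $A^{n}$ and $B^{n}$ gives
\[
A^{n+1}+B^{n+1} \leq A^{n}+B^{n} + \dt\Big[(\Lexp_{11}^{n}+\Lexp_{21}^{n})A^{n} + (\Lexp_{12}^{n}+\Lexp_{22}^{n})B^{n}\Big] + \dt\, E^{n},
\]
with $E^{n}$ defined exactly as in the statement.

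The remaining step is to bound the bracketed expression by $\Lexp^{n}(A^{n}+B^{n})$. Since $A^{n}\geq 0$ and $B^{n}\geq 0$ (each is a sum of squared norms with nonnegative coefficients, using $\dt,\mu,\rho,\lambda>0$), and since by definition $\Lexp^{n}=\max\{\Lexp_{11}^{n}+\Lexp_{21}^{n},\ \Lexp_{12}^{n}+\Lexp_{22}^{n}\}$, we have $\Lexp_{11}^{n}+\Lexp_{21}^{n}\leq\Lexp^{n}$ and $\Lexp_{12}^{n}+\Lexp_{22}^{n}\leq\Lexp^{n}$, hence
\[
(\Lexp_{11}^{n}+\Lexp_{21}^{n})A^{n} + (\Lexp_{12}^{n}+\Lexp_{22}^{n})B^{n} \leq \Lexp^{n}A^{n} + \Lexp^{n}B^{n} = \Lexp^{n}(A^{n}+B^{n}).
\]
Substituting this into the previous display yields \eqref{IterIneq1}. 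One should also note that $\Lexp^{n}$ as defined is precisely the matrix $1$-norm (maximum absolute column sum) of $\LexpM^{n}$, and that all four entries of $\LexpM^{n}$ are nonnegative, so the column-sum maxima coincide with the stated expression without absolute values; this is worth a sentence for clarity but requires no computation.

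There is essentially no obstacle here: the corollary is a purely algebraic consequence of the lemma, and the only facts invoked are the nonnegativity of $A^{n}$ and $B^{n}$ and the elementary observation that a convex-combination-style bound follows from taking the larger of the two column sums. The one place to be slightly careful is to confirm nonnegativity of $A^{n}$ and $B^{n}$ so that replacing the column sums by their maximum is a valid upper bound rather than a lower bound; this is immediate from their definitions in Lemma~\ref{IterInequalities}.
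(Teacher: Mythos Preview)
Your proposal is correct and is exactly the natural argument the paper has in mind: the corollary is stated without proof precisely because summing the two rows of \eqref{IterIneq} and bounding each column sum by the maximum column sum $\Lexp^{n}$, using $A^{n},B^{n}\geq 0$, is immediate. There is nothing to add.
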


\begin{proof}[Proof of the convergence theorem] For clarity, quantities in this proof indexed by $n$ or $m$ will be made parenthetic to distinguish from exponentiation. We begin by noting that there exist upper bounds $\Lexp$ and $k_{1}$ such that 
\begin{equation}
    |\uc_{\alpha}^{(n)}(\x)| \leq 2\Mu \quad\Rightarrow \quad\Lexp^{(n)} \leq \Lexp \quad\text{and}\quad k_{1}^{(n)} \leq k_{1}
\end{equation}
For all $n$. Fix $\dt$ and let $\mathcal{P}(n)$ be the proposition that
\begin{equation}
A^{(n)} + B^{(n)} \leq \max_{m\leq n}\left(E^{(m)}\right)\frac{(1+\dt\Lexp)^{n} - 1}{\Lambda} \label{hyp1}
\end{equation}
and
\begin{equation}
    |\uc_{\alpha}^{(n)}(\x)| \leq 2\Mu \label{hyp2}
\end{equation}
Clearly $\mathcal{P}(0)$ is true, since $\uc_{\alpha}^{(0)}(\x)=\ue_{\alpha}^{(0)}(\x)$ and $\Xcb^{(0)}=\Xeb^{(0)}$, so $A^{(0)},B^{(0)}=0$. Now suppose $\mathcal{P}(n)$ is true. Then from corollary \ref{Cor:Iter} and the inductive hypotheses \eqref{hyp1} and \eqref{hyp2}

\begin{align*}
    A^{(n+1)} + B^{(n+1)} &\leq (1+ \dt\Lexp^{(n)})(A^{(n)}+B^{(n)}) + \dt E^{(n)} \\
    &\leq (1+ \dt\Lexp)\left(\max_{m\leq n}(E^{(m)})\frac{(1+\dt\Lexp)^{n} - 1}{\Lexp}\right) + \dt E^{(n)} \\
    &= \max_{m\leq n}(E^{(m)})\frac{(1+\dt\Lexp)^{n+1} - 1}{\Lexp} - \dt \max_{m\leq n}(E^{(m)}) + \dt E^{(n)} \\
    &\leq \max_{m\leq n+1}(E^{(m)})\frac{(1+\dt\Lexp)^{n+1} - 1}{\Lexp}
\end{align*}

This completes the first half of the inductive step. Now we must show that $|\uc_{\alpha}^{(n+1)}(\x)| \leq 2\Mu$. Since we're only interested in times earlier than $T$, we may assume $(n+1)\dt \leq T$, so that

\begin{align*}
    \|\usEb^{(n+1)}(\x)\| \leq A^{(n+1)} &\leq \max_{m\leq n+1}(E^{(m)})\frac{(1+\dt\Lexp)^{n+1} - 1}{\Lexp} \\
    &\leq \max_{m\leq n+1}(E^{(m)})\left(e^{\Lexp T}-1\right)
\end{align*}

From lemmas \ref{Decomp} and \ref{ConstraintBound}

\begin{align*}
    \|\uEb^{(n+1)}\|^2 &= \|\usEb^{(n+1)}\|^2 + \|\D\psi^{(n+1)}\|^2 \\
    & \leq \|\usEb^{(n+1)}\|^2 + \frac{1}{\lambda}\|\pres^{(n+1)}\|^2 \\
    &\leq \max_{m\leq n+1}(E^{(m)})\left(e^{\Lexp T}-1\right) + \frac{1}{\lambda}\|\pres^{(n+1)}\|^2
\end{align*}

We can use to establish a pointwise bound on $\uE_{\alpha}^{(n+1)}(\x)$

\begin{align*}
    \left(\uE_{\alpha}^{(n+1)}\right)^{2} &\leq \sum_{\alpha=1}^{2}\sum_{\x\in\Grid} \left(\uE_{\alpha}^{(n+1)}\right)^{2} \\
    &= \|\uEb^{(n+1)}\|^2 \frac{\length^2}{\dx^2}
\end{align*}

To conclude,

\begin{align*}
    \left|\uc^{(n+1)}_{\alpha}(\x)\right| &= \left|\uc^{(n+1)}_{\alpha}(\x)-\ue^{(n+1)}_{\alpha}(\x)+\ue^{(n+1)}_{\alpha}(\x)\right| \\
    &\leq \left|\uE^{(n+1)}_{\alpha}(\x)|+|\ue^{(n+1)}_{\alpha}(\x)\right| \\
    &\leq \frac{L}{\dx}\sqrt{\max_{m\leq n+1}\left(E^{(m)}\right)\left(e^{\Lexp T}-1\right) + \frac{1}{\lambda}\|\pres^{(n+1)}\|^2} + \Mu
\end{align*}

As a consequence of lemma \ref{Consistency} the term $E^{(n)} = O(\dt^2)$ and $\|\pres^{n}\|=O(\dt)$ {\it independently} of $n$ (assuming as always that $\dt \propto \dx^2$). Therefore, if $\dt$ happens to be chosen sufficiently small,

\begin{equation}
    \frac{L}{\dx}\sqrt{\max_{m\leq n+1}\left(E^{(m)}\right)\left(e^{\Lexp T}-1\right) + \frac{1}{\lambda}\|\pres^{(n+1)}\|^2}
\end{equation}

will be less than $\Mu$, and the second half of the inductive step will be proven. This closes the loop and proves $\mathcal{P}(n)$ true for all $n$ such that $n \dt \leq T$, provided $\dt$ is sufficiently small. From reasoning similar to the above argument, it follows from \eqref{hyp1} that

\begin{align*}
    \|\uEb^{(n)}\|^2 &= \|\usEb^{(n)}\|^2 + \|\D\psi^{(n)}\|^2 \\
    & \leq A^{(n)} + \frac{1}{\lambda}\|\pres^{(n)}\|^2 \\
    &\leq \max_{m\leq n}\left(E^{(m)}\right)\left(e^{\Lexp T}-1\right) + \frac{1}{\lambda}\|\pres^{(n)}\|^2
\end{align*}
and
\begin{align*}
    \|\XEb^{(n)}\|^2 & \leq B^{(n)} \\
    &\leq \max_{m\leq n}\left(E^{(m)}\right)\left(e^{\Lexp T}-1\right)
\end{align*}
Both error bounds are $O(\dt^2)$, proving the convergence result.
\end{proof}

\section{Proofs of the supporting lemmas} \label{sec:lem}

Now we go back and prove our supporting lemmas

\begin{proof}[Proof of Lemma \ref{Consistency}]
    As mentioned above, the proof is mostly a matter of evaluating each of the terms via Taylor expansions. Let $C^{n}$ denote the set of periodic functions from $\Omega$ to $\mathbb{R}$ for which all $n$th order partial derivatives are continuous. First, we note that when $\psi$ is either $C^{3}$ or $C^{4}$, we have the relation
    \begin{equation}
    D^{0}_{\alpha}\psi = \frac{\psi(\x_{ij}+h\e_{\alpha}) - \psi(\x_{ij}-h\e_{\alpha})}{2h} = \left.\frac{\partial \psi}{\partial x_{\alpha}}\right|_{\x_{ij}} + \left.\frac{h^2}{6} \frac{\partial^3 \psi}{\partial x_{\alpha}^3}\right|_{\x_{ij}} + o(h^2)
    \end{equation}
    where $\e_{1},\e_{2}$ are standard basis vectors. We have
    \begin{align*}
    (\ueb^{n}\cdot\D)u_{\alpha}^{n} &= \sum_{\beta=1}^{2}u^{n}_{\beta}D_{\beta}^{0}u_{\alpha}^{n} \\
    &= \sum_{\beta=1}^{2}u^{n}_{\beta}\left(\left.\frac{\partial u_{\alpha}}{\partial x_{\beta}}\right|_{\x_{ij}} + \frac{h^2}{6} \left.\frac{\partial^3 u_{\alpha}}{\partial x_{\beta}^3}\right|_{\x_{ij}} + o(h^2)\right) = (\ueb^{n}\cdot\nabla)u_{\alpha}^n + O(h^2)
    \end{align*}
    for the pressure term, we get
    \begin{align*}
    D^{0}_{\alpha}\pe^{n+1} &= D^{0}_{\alpha}\pe^{n} + \dt D^{0}_{\alpha}\left.\frac{\partial \pe}{\partial t}\right|^{n} + o(\dt) \\
    &= \left.\frac{\partial \pe}{\partial x_{\alpha}}\right|^{n}_{\x_{ij}} + \frac{h^2}{6} \left.\frac{\partial^3 \pe}{\partial x_{\alpha}^3}\right|^{n}_{\x_{ij}} + o(h^2) \\ 
    & \qquad + \dt\left(\left.\frac{\partial^2 \pe}{\partial x_{\alpha}\partial t}\right|^{n}_{\x_{ij}} + \frac{h^2}{6}\left.\frac{\partial^4 \pe}{\partial x_{\alpha}^{3}\partial t}\right|^{n}_{\x_{ij}} + o(h^2)\right) + o(\dt) \\
    &= \left.\frac{\partial \pe}{\partial x_{\alpha}}\right|^{n}_{\x_{ij}} + O(\dt) + O(h^2)
    \end{align*}
    The Taylor expansion of the discretised Laplacian $\Dp\cdot\Dm$ is
    \begin{equation}
    \begin{split}
    \Dp\cdot\Dm\psi &= \sum_{\alpha=1}^{2}\frac{\psi(\x_{ij}+h\e_{\alpha}) - 2\psi(\x_{ij}) + \psi(\x_{ij}-h\e_{\alpha})}{h^2} \\
    &=\left\{
    \begin{array}{lr}
    \nabla^2 \psi + \sum_{\alpha=1}^{2}\frac{h^2}{12}\frac{\partial^4 \psi}{\partial x_{\alpha}^4} + o(h^3) & \text{if } \psi \text{ is } C^{4}\\
    \nabla^2 \psi + o(h) & \text{if } \psi \text{ is } C^{3}
    \end{array}
    \right.
    \end{split}
    \end{equation}
    So the viscosity term works out to be
    \begin{align*}
    \Dp\cdot\Dm u_{\alpha}^{n+1} &= \Dp\cdot\Dm u_{\alpha}^{n} + \dt\Dp\cdot\Dm\left.\frac{\partial u_{\alpha}}{\partial t}\right|^{n} + o(\dt) \\
     &= \nabla^2 u_{\alpha}^{n} + h^2\sum_{\alpha=1}^{2}\frac{1}{12}\left.\frac{\partial^4 u_{\alpha}}{\partial x_{\alpha}^4}\right|^{n}_{\x_{ij}}+ \dt \left(\left.\frac{\partial}{\partial t} \nabla^2 u_{\alpha}\right|^{n}_{\x_{ij}} + o(h)\right) + o(\dt) \\
     &= \nabla^2 u_{\alpha}^{n} + O(\dt) + O(h^2)
    \end{align*}
    Finally, we have
    \begin{equation*}
    \frac{\ueb^{n+1}-\ueb^{n}}{\dt} = \left.\frac{\partial \ueb}{\partial t}\right|^{n}_{\x_{ij}} + O(\dt)
    \end{equation*}
    Combining the above expansions, we obtain an expression for $\ures^{n}$
    \begin{align*}
        \ures^{n} &= \rho\left(\frac{\partial \ueb}{\partial t} + (\ueb^{n}\cdot\nabla)\ueb^{n} \right) + \nabla\pe^{n} - \mu\nabla^2\ueb^{n} - \F(\Xeb^{n})\dc(\x-\Xeb^{n}) + O(\dt) + O(h^2)\\
        &= O(\dt) + O(h^2)
    \end{align*}
    As $h^2 \propto \dt$, $\|\ures^{n}\|= O(\dt)$ as desired. For the continuity equation, we have
    \begin{equation*}
    \pres^{n+1} = \D\cdot\ueb^{n+1} = \sum_{\alpha=1}^{2}D^{0}_{\alpha}u_{\alpha}^{n+1} = \sum_{\alpha=1}^{2}\left.\frac{\partial u_{\alpha}}{\partial x_{\alpha}}\right|^{n+1}_{\x_{ij}} + O(h^2) = O(h^2)
    \end{equation*}
    We also have
    \begin{align*}
    D^{-}_{\alpha}\pres^{n+1} &= D^{-}_{\alpha}\D\cdot\ueb^{n+1} \\
    &= D^{-}_{\alpha}\left(\sum_{\beta=1}^{2}\left.\frac{\partial u_{\beta}}{\partial x_{\beta}}\right|^{n+1}_{\x_{ij}} + \frac{h^2}{6}\left.\frac{\partial^3 u_{\beta}}{\partial x_{\beta}^3}\right|^{n+1}_{\x_{ij}} + o(h^2)\right) \\
    &= \sum_{\beta=1}^{2}\frac{h^2}{6}\left.\frac{\partial^4 u_{\beta}}{\partial x_{\beta}^3\partial x_{\alpha}}\right|^{n+1}_{\x_{ij}} + o(h^2) = O(h^2)
    \end{align*}
    To prove consistency of the residue $\Xres^{n}$, we need to estimate the error between the integral 
    \begin{equation*}
    \int u_{\alpha}(\x,t)\dc(\x-\Xeb)d\x \label{ContAdvec}
    \end{equation*}
    and the discrete sum 
    \begin{equation*}
    \sum_{\x\in \Grid}u_{\alpha}^{n}(\x_{ij})\dc(\x_{ij}-\Xeb^{n})h^2
    \end{equation*}
    Consider a cell of width $h$, centred at $\x$: Since $\ueb\dc(\x-\Xeb)$ is $C^2$, we have by Taylor's theorem
    \begin{equation*}
        \int_{-\frac{h}{2}}^{\frac{h}{2}}\int_{-\frac{h}{2}}^{\frac{h}{2}} u_{\alpha}(\x + \mathbf{y},t)\dc(\x + \mathbf{y}-\Xeb) d\mathbf{y} = \ueb(\x)\dc(\x-\Xeb)h^2 + O(h^4)
    \end{equation*}
        Where the linear term vanishes by symmetry. Summing over all $\x$, we obtain
    \begin{align*}
    \sum_{\x\in \Grid}u_{\alpha}^{n}(\x)\dc(\x-\Xeb^{n})h^2 = \int_{0}^{L}\int_{0}^{L}u_{\alpha}\delta_{c}(\x-\Xeb)dx_{1}dx_{2} + O(h^2)
    \end{align*}
 Finally, as
    \begin{equation*}
        \frac{\tilde{X}_{\alpha}^{n+1} - \tilde{X}_{\alpha}^{n}}{\dt} = \left.\frac{\partial \tilde{X}_{\alpha}}{\partial t}\right|^{n} + \dt\left.\frac{\partial^2 \tilde{X}_{\alpha}}{\partial t^2}\right|^{n} + o(\dt)
    \end{equation*}
    We conclude that $\Xres^n = O(\dt) + O(h^2)$
\end{proof}

\begin{proof}[Proof of Lemma \ref{Decomp}]
Let $V$ be the space of vector valued gridfunctions; let $S$ be the space of scalar valued grid functions; and let $\{\e_{1},\e_{2}\}$ denote the standard basis vectors. Then for any $\uEb\in V$ and $\psi\in S$, we can use periodicity of $\D$ and summation by parts to show

\begin{align*}
(\psi,\D \cdot\uEb) &= \frac{h^2}{L^2}\sum_{\alpha=1}^{2}\sum_{\x\in \Grid}\frac{v_{\alpha}(\x_{ij}+h\e_{\alpha})-v_{\alpha}(\x_{ij}-h\e_{\alpha})}{2h}\psi(\x_{ij}) \\
&= \frac{h^2}{L^2}\sum_{\alpha=1}^{2}\frac{1}{2h}\sum_{\x'\in \Grid}(v_{\alpha}(\x'_{ij})\psi(\x'_{ij}-h\e_{\alpha})-\sum_{\x''\in \Grid}v_{\alpha}(\x''_{ij})\psi(\x''_{ij}+h\e_{\alpha}) \\
&= -\frac{h^2}{L^2}\sum_{\alpha=1}^{2}\sum_{\x\in \Grid}v_{\alpha}(\x_{ij})\frac{\psi(\x_{ij}+h\e_{\alpha})-\psi(\x_{ij}-h\e_{\alpha})}{2h} \\
&= -(\D\psi,\uEb)
\end{align*}

So the discrete divergence operator $\D\cdot$ is minus the adjoint of the discrete gradient operator $\D$, and it follows from the theory of dual spaces\footnote{\cite{lax2014linear} is a good reference} that the kernel of $\D\cdot$ is the orthogonal complement of the range of $\D$. Thus any $\uEb\in V$ may be uniquely decomposed into orthogonal components

\begin{equation}
\uEb = \usEb + \D \psi
\end{equation}

for some $\usEb \in \mathrm{ker}(\D)$ and $\psi\in S$. Since $\D$ has a non-trivial kernel, $\psi$ is not uniquely defined until we specify the further stipulation that $\psi$ should be orthogonal to $\mathrm{ker}(\D)$
\end{proof}

\begin{proof}[Proof of Lemma \ref{ConstraintBound}]
Consider equation \eqref{ErrIncomp1}. Taking the inner product with $\psi^{n}$ and applying Cauchy-Schwarz, we have

\begin{equation}
\|\D\psi^{n}\|^2 = -(\psi^{n},\D\cdot\D\psi^{n}) = (\psi^{n},\pres^{n})\leq \|\psi^{n}\|\|\pres^{n}\|
\end{equation}

What remains is to bound $\psi^{n}$ by $-\D\cdot\D\psi^{n} = \pres^{n}$. Although $-(\D\cdot\D)$ is not strictly positive definite, $\psi^{n}$ lies orthogonal to its kernel. This means that the Raleigh quotient 

\begin{equation*}
-\frac{(\psi^{n},\D\cdot\D\psi^{n})}{(\psi^{n},\psi^{n})}
\end{equation*}

will be strictly positive. To show this, we note that $-\D\cdot\D$ is circulant, and can be expanded by a basis of Fourier modes. Let

\begin{equation*}
\psi_{rs}(\x_{ij}) = \omega^{ir+js} \qquad \text{ where $\omega$ is a primitive $N$-th root of unity}
\end{equation*}

Then $\{\psi_{rs}:0\leq r,s < N\}$ are orthonormal eigen-gridfunctions of $-\D\cdot\D$, and a basis of $S$. The corresponding eigenvalues are

\begin{equation}
\lambda_{rs} = \frac{1}{4h^2}\left[\left(\sin\left(\frac{2\pi}{N}r\right)\right)^2 + \left(\sin\left(\frac{2\pi}{N}s\right)\right)^2\right]
\label{LapSpec}
\end{equation}

Suppose for simplicity that $N$ is even. Then the kernel of $-\D\cdot\D$ is 4 dimensional, spanned by $\{\psi_{rs}:r,s\in\{0,N/2\}\}$ (this is the subspace of gridfunctions which are either constant or alternating in sign along each dimension). We note in particular, that this sub-space is equal to the kernel of $\D$. Since $\psi^{n}$ is defined orthogonal to the kernel of $\D$, we can find an orthogonal expansion of the form

\begin{equation*}
\psi^{n} = \sum_{\substack{0\leq r,s<N \\ r,s\notin\{0,N/2\}}} a_{rs}\psi_{rs}
\end{equation*}

From which we obtain

\begin{equation*}
\begin{split}
    -(\psi^{n},\D\cdot\D\psi^{n}) = \frac{1}{h^2} \sum_{\substack{0\leq r,s<N \\ r,s \notin \{0,N/2\}}}\lambda_{rs}a_{rs}^{2} &\geq \min_{r,s}\{\lambda_{rs}\}\frac{1}{h^2} \sum_{\substack{0\leq r,s<N \\ r,s \notin \{0,N/2\}}} a_{rs}^{2} \\
    &= \min_{r,s}\{\lambda_{rs}\}(\psi^{n},\psi^{n})
\end{split}
\end{equation*}

Defining 
$\lambda=\min\{\lambda_{rs}:0\leq r,s <N,h>0\}$, we obtain the bound $\lambda\|\psi^{n}\|^{2} \leq (\psi^{n},\pres^{n}) \leq \|\psi^{n}\|\|\pres^{n}\|$. Whence we find, as desired,

\begin{equation}
\|\D\psi^{n}\|^2 \leq \|\psi^{n}\|\|\pres^{n}\| \leq \frac{1}{\lambda}\|\pres^{n}\|^2
\end{equation}
\end{proof}

\begin{proof}[Proof of Lemma \ref{IterInequalities}]
 We begin with the iterative inequality for $A^{n}$. The objective here is to use \eqref{ErrMom2} to calculate the growth of the discretely divergence-free error $\usEb^{n}$ in one timestep. The first step to do this is to extract and isolate all instances of $\usEb^{n+1}$ from terms evaluated at $n\dt$. Assume $|\uc_{\alpha}^{n}(\x)| \leq \Muc$. Taking the inner product of equation \eqref{ErrMom2} with $\usEb^{n+1}$, the discretely curl-free terms vanish, leaving us with

\begin{equation}
\begin{split}
\|\usEb^{n+1}\|^2 = (\usEb^{n+1},\usEb^{n}) - \dt(\usEb^{n+1},(\ucb^{n}\cdot\D)\uEb^{n}) - \dt(\usEb^{n+1},(\uEb^{n}\cdot\D)\ueb^{n}) \\
- \dt\frac{\mu}{\rho}\sum_{\alpha=1}^{2}\|D_{\alpha}^{-}\usEb^{n+1}\|^{2} + \frac{\dt}{\rho}(\usEb^{n+1},(\F(\Xcb^{n})-\F(\Xeb^{n}))\dc(\x-\Xcb^{n})) \\
+ \frac{\dt}{\rho}(\usEb^{n+1},\F(\Xeb^{n})(\dc(\x-\Xcb^{n})-\dc(\x-\Xeb^{n}))) - \frac{\dt}{\rho}(\usEb^{n+1},\ures^{n})
\end{split} 
\end{equation}

Where we have used the fact that $\D,\Dp,\Dm$ commute, and the fact that $\Dm$ is minus the adjoint of $\Dp$. Applying Cauchy-Schwarz to each inner product, we obtain

\begin{equation}
\begin{split}
\|\usEb^{n+1}\|^2 + &\dt\frac{\mu}{\rho}\sum_{\alpha=1}^{2}\|D^{-}_{\alpha}\usEb^{n+1}\|^{2} \leq \\ &\|\usEb^{n+1}\|\Bigg(\|\usEb^{n}\| + \sqrt{2}\dt \Muc \left(\sum_{\beta=1}^{2}\|D^{-}_{\beta}\uEb^{n}\|^{2} \right)^{1/2}  \\
&\qquad + \dt \MDu \|\uEb^{n}\| + \frac{\dt}{\rho}\|(\F(\Xcb^{n})-\F(\Xeb^{n}))\dc(\x-\Xcb^{n})\| \\
&\qquad+ \frac{\dt}{\rho}\|\F(\Xeb^{n})(\dc(\x-\Xcb^{n})-\dc(\x-\Xeb^{n}))\| - \frac{\dt}{\rho}\|\ures^{n}\| \Bigg)
\end{split}
\end{equation}

Where we've treated the non-linear advection terms specially as follows:

\begin{align*}
\|(\ucb^{n}\cdot\D)\uEb^{n})\|^2 &= \sum_{\x\in \Grid}\sum_{\alpha=1}^{2}\left( \sum_{\beta = 1}^{2}  \uc^{n}_{\beta}(\x)D_{\beta}^{0}v^{n}_{\alpha}(\x)\right)^{2} \frac{h^2}{L^2} \\
&\sum_{\x\in \Grid}\sum_{\alpha=1}^{2}\sum_{\beta'=1}^{2} \left(\uc^{n}_{\beta'}\right)^{2} \sum_{\beta = 1}^{2}  \left(D_{\beta}^{0}v^{n}_{\alpha}\right)^{2} \frac{h^2}{L^2} \\
&\leq 2 \Muc^2 \sum_{\x\in \Grid} \sum_{\alpha= 1}^{2}\sum_{\beta= 1}^{2}\left(D_{\beta}^{0}v^{n}_{\alpha}\right)^{2}  \frac{h^2}{L^2} \\
&\leq2 \Muc^2 \sum_{\beta=1}^{2} \|D_{\beta}^{0}v^{n}_{\alpha}\|^{2}   \\
&\leq 2 \Muc^2 \sum_{\beta=1}^{2} \|D_{\beta}^{-}v^{n}_{\alpha}\|^{2}
\end{align*}

(In the last line we use the fact that $\|D^{-}_{\beta}\uEb\| \geq \|D_{\beta}^{0}\uEb\|$. This is true because of the relation $D_{\beta}^{0} = \frac{1}{2}(D^{+}_{\beta} + D^{-}_{\beta})$ and $\|D^{+}_{\beta}\uEb\| = \|D^{-}_{\beta}\uEb\|$, which is a consequence of periodicity. We also have

\begin{align*}
\|(\uEb^{n}\cdot\D)\ueb^{n}\|^{2} &= \sum_{\x\in \Grid}\sum_{\alpha=1}^{2}\sum_{\beta = 1}^{2} \left( \uE^{n}_{\beta}(\x)D_{\beta}^{0}\ue^{n}_{\alpha}(\x) \right)^2 \frac{h^2}{L^2} \\
&\leq \sum_{\x\in \Grid}\sum_{\alpha=1}^{2}\sum_{\beta=1}^{2}\left(\uE^{n}_{\beta}\right)^{2}\sum_{\beta' = 1}^{2} \left( D_{\beta'}^{0}\ue^{n}_{\alpha} \right)^2 \frac{h^2}{L^2} \\
&\leq 4 \MDu^2 \|\uEb\|^2
\end{align*}

We may also treat the IB forcing term, by appealing to the boundedness and Lipschitz continuity of $\F$ and $\delta_{c}$.

\begin{equation}
\begin{split}
    \frac{1}{\rho} \left\|(\F(\Xcb^{n})-\F(\Xeb^{n}))\dc(\x-\Xcb^{n})\right\| + \frac{1}{\rho}\left\|\F(\Xeb^{n})(\dc(\x-\Xcb^{n})-\dc(\x-\Xeb^{n}))\right\| \\
    \leq \frac{\LF \Md + \MF \Ld}{\rho}\|\XEb^{n}\|
\end{split}
\end{equation}

This yields the inequality

\begin{equation}
\begin{split}
\|\usEb^{n+1}\|^2 + &\dt\frac{\mu}{\rho}\sum_{\alpha=1}^{2}\|D^{-}_{\alpha}\usEb^{n+1}\|^{2} \leq \\ &\|\usEb^{n+1}\|\Bigg(\|\usEb^{n}\| + \sqrt{2}\dt \Muc\left(\sum_{\beta=1}^{2}\|D^{-}_{\beta}\uEb^{n}\|^{2} \right)^{1/2}  \\
&\qquad + 2 \dt \MDu \|\uEb^{n}\| + \frac{\LF \Md + \MF \Ld}{\rho}\|\XEb^{n}\| - \frac{\dt}{\rho}\|\ures^{n}\| \Bigg)
\end{split}
\end{equation}

To isolate the factor $\|\usEb^{n+1}\|$ we apply the inequality $ab \leq \frac{1}{2}a^2 + \frac{1}{2}b^{2}$

\begin{equation}
\begin{split}
&\|\usEb^{n+1}\|^2 + \dt\frac{\mu}{\rho}\sum_{\alpha=1}^{2}\|D^{-}_{\alpha}\usEb^{n+1}\|^{2} \leq \frac{1}{2}\|\usEb^{n+1}\|^2  \\
&\qquad \qquad+ \frac{1}{2} \Bigg[\|\usEb^{n}\| + \sqrt{2}\dt \Muc \left(\sum_{\beta=1}^{2}\|D^{-}_{\beta}\uEb^{n}\|_{2}^{2} \right)^{1/2} + 2\dt \MDu \|\uEb^{n}\| \\
& \qquad\qquad\qquad + \dt\frac{\LF \Md + \MF \Ld}{\rho}\|\XEb^{n}\| + \frac{\dt}{\rho}\|\ures^{n}\| \Bigg)\Bigg]^{2}
\end{split}
\end{equation}

and subtract by $\frac{1}{2}\|\usEb^{n+1}\|^2$. Now that all instances of the $(n+1)$th step have been moved to the right hand side of the inequality, the next step is to find the growth factor. Rewriting the left hand side, we have

\begin{equation*}
\begin{split}
&\frac{1}{2}\|\usEb^{n+1}\|^2 + \dt\frac{\mu}{\rho}\sum_{\alpha=1}^{2}\|D^{-}_{\alpha}\usEb^{n+1}\|^{2} \leq  \\
&\quad
\frac{1}{2}
\left[ 
\left(
\begin{array}{c}
1 \\
2\sqrt{\dt T_{0}}\MDu \\
\sqrt{2\dt\frac{\rho}{\mu}}\Muc \\
\sqrt{\frac{\dt T_{0}^{3}}{2}}\left(\frac{\LF \Md + \MF \Ld}{\rho}\right) \\
\sqrt{\frac{\dt}{T_{0}}}
\end{array}
\right)
\cdot
\left(
\begin{array}{c}
\|\usEb^n\| \\
\sqrt{\frac{\dt}{T_{0}}}\|\uEb^n\| \\
\sqrt{2\dt\frac{\mu}{\rho}}\left(\sum_{\beta=1}^{2}\|D^{-}_{\beta}\uEb^{n}\|_{2}^{2} \right)^{1/2} \\
\sqrt{2\frac{\dt}{T_{0}^3}}\|\XEb^n\| \\
\frac{\sqrt{\dt T_{0}}}{\rho}\|\ures^{n}\|
\end{array}
\right)
\right]^{2}
\end{split}
\end{equation*}
Holds for any positive constant $T_{0}$. The introduction of this constant ensures each term in the first factor is dimensionless. Now we apply Cauchy-Schwarz to this inner product to obtain

\begin{equation}
\begin{split}
&\frac{1}{2}\|\usEb^{n+1}\|^2 + \dt\frac{\mu}{\rho}\sum_{\alpha=1}^{2}\|D^{-}_{\alpha}\usEb^{n+1}\|^{2} \leq  \\
&\quad \frac{1}{2}\left( 1 + \dt \underbrace{\left(4T_{0} \MDu^2 + \frac{\rho}{\mu}\Muc^2 + \frac{\LF \Md + \MF \Ld}{2\rho^2}T_{0}^{3} + \frac{1}{T_{0}} \right)}_{\text{constant}}\right) \\
&\quad\cdot \left( \|\usEb^{n}\|^2 + \frac{\dt}{T_{0}}\|\uEb^{n}\|^2 + 2\dt \frac{\mu}{\rho}\sum_{\alpha=1}^{2}\|D^{-}_{\beta}\uEb^{n}\|_{2}^{2} + \frac{2\dt}{T_{0}^3}\|\XEb^{n}\|^2 + \frac{\dt T_{0}}{\rho^2}\|\ures^{n}\|\right)
\end{split} \label{ErrorIneq}
\end{equation}

We now wish to address the remaining $\uEb^n$ terms. By the discrete Helmholtz decomposition (Lemma \ref{Decomp}), $\uEb^{n}$ may be decomposed as

\begin{equation}
    \uEb^{n} = \usEb^{n} + \D\psi^{n}
\end{equation}

And since this decomposition is orthogonal, we have

\begin{align*}
    \|\uEb^{n}\|^2 = \|\usEb^{n}\|^2 + \|\D\psi^{n}\|^2 \\
    \leq \|\usEb^{n}\|^2 + \frac{1}{\lambda}\|\pres^{n}\|^2
\end{align*}

Since 
Where the inequality comes from Lemma \ref{ConstraintBound}. Furthermore, since $\D$ and $D^{-}_{\alpha}$ are commutative,

\begin{align*}
    D^{-}_{\alpha}\uEb^{n} &= D^{-}_{\alpha}\usEb^{n} + \D D^{-}_{\alpha}\psi^{n} \\
    \D\cdot D^{-}_{\alpha} \usEb^{n} &= 0
\end{align*}

thus $D^{-}_{\alpha} \usEb^{n}$ and $\D D^{-}_{\alpha}\psi^{n}$ are orthogonal. Applying Lemma \ref{ConstraintBound} again, we find

\begin{align*}
    \|D^{-}_{\alpha}\uEb^{n}\|^2 = \|D^{-}_{\alpha}\usEb^{n}\|^2 + \|\D D^{-}_{\alpha}\psi^{n}\|^2 \\
    \leq \|D^{-}_{\alpha}\usEb^{n}\|^2 + \frac{1}{\lambda}\|D^{-}_{\alpha}\pres^{n}\|^2
\end{align*}

applying these decomposition inequalities to \eqref{ErrorIneq}, we have 

\begin{equation}
\begin{split}
&\frac{1}{2}\|\usEb^{n+1}\|^2 + \dt\frac{\mu}{\rho}\sum_{\alpha=1}^{2}\|D^{-}_{\alpha}\usEb^{n+1}\|^{2} \leq  \\
&\qquad \frac{1}{2}\left( 1 + O(\dt) \right)\cdot \left( \left(1 + \frac{\dt}{T_{0}}\right) \left(\frac{1}{2}\|\usEb^{n}\|^2 + \dt\frac{\mu}{\rho}\sum_{\alpha=1}^{2}\|D^{-}_{\alpha}\usEb^{n}\|^{2} \right)\right. \\
&\qquad+ \left.\frac{\dt}{T_{0}^3}\|\XEb^{n}\|^2 + \dt\left(\frac{\|\pres^{n}\|}{2\lambda T_{0}} + \frac{\mu}{\rho\lambda}\sum_{\alpha=1}^{2}\|D^{-}_{\alpha}\pres^{n}\|^{2} +\frac{\dt T_{0}}{2\rho^2}\|\ures^{n}\|\right)\right)
\end{split} 
\end{equation}

The second inequality (for $\XEb^{n+1}$) is obtained through a similar procedure to the one above. From \eqref{ErrIBP1}, we take the inner product with $\XEb^{n+1}$, and find

\begin{equation}
\begin{split}
&\|\XEb^{n+1}\|^{2} = \left(\XEb^{n+1},\XEb^{n}\right) + \dt \left(\XEb^{n+1},\sum_{\x\in\Grid}\uEb^{n}(\x)\dc(\x-\Xcb^{n})\dx^2\right) \\
&\quad+ \dt\left(\XEb^{n+1},\sum_{\x\in\Grid}\ueb^{n}(\x)\left(\dc(\x-\Xcb^{n})-\dc(\x-\Xeb^{n})\right)\dx^2\right) -\dt (\XEb^{n+1},\Xres^{n})
\end{split}
\end{equation}

From Cauchy-Schwarz, we obtain 

\begin{equation}
\begin{split}
&\|\XEb^{n+1}\|^{2} \leq \|\XEb^{n+1}\| \left(\|\XEb^{n}\| + \dt\left\|\sum_{\x\in\Grid}\uEb^{n}(\x)\dc(\x-\Xcb^{n})\dx^2\right\|\right. \\
&\qquad\left.+\dt \left\|\sum_{\x\in\Grid}\ueb^{n}(\x)\left(\dc(\x-\Xcb^{n})-\dc(\x-\Xeb^{n})\right)\dx^2\right\| + \dt \|\XEb^{n+1}\| \right)
\end{split}
\end{equation}

Since

\begin{align*}
    &\left\|\sum_{\x\in\Grid}\uEb^{n}(\x)\dc(\x-\Xcb^{n})\dx^2\right\|^2 = \sum_{\alpha=1}^{2}\left(\sum_{\x\in\Grid}\uEb^{n}(\x)\dc(\x-\Xcb^{n})\dx^2\right)^2 \\
    &\qquad\qquad= \length^4 \sum_{\alpha=1}^{2}\left(\sum_{\x\in\Grid}\uEb^{n}(\x)\dc(\x-\Xcb^{n})\frac{\dx^2}{\length^2}\right)^2 \\
    &\qquad\qquad\leq \length^4 \left(\sum_{\alpha=1}^{2}\left(\sum_{\x\in\Grid}\left(\uEb^{n}(\x)\right)^2\frac{\dx^2}{\length^2}\right)\left(\sum_{\x'\in\Grid}\dc^{2}(\x'-\Xcb^{n})\frac{\dx^2}{\length^2}\right)\right)^2 \\
    &\qquad\qquad \leq \length^4 \|\uEb^{n}\|^2 \Md^2
\end{align*}

and

\begin{equation}
\begin{split}
    &\left\|\sum_{\x\in\Grid}\ueb^{n}(\x)\left(\dc(\x-\Xcb^{n})-\dc(\x-\Xeb^{n})\right)\dx^2\right\|^2 \\
    &\qquad\qquad= \sum_{\alpha=1}^{2}\left(\sum_{\x\in\Grid}\ueb^{n}(\x)\left(\dc(\x-\Xcb^{n})-\dc(\x-\Xeb^{n})\right)\dx^2\right)^2 \\
    &\qquad\qquad \leq 2\length^4 \Mu^2 \Ld \|\XEb^{n}\|^2
    \end{split}
\end{equation}

we infer

\begin{align}
\|\XEb^{n+1}\|^{2} &\leq 
\left[ 
\left(
\begin{array}{c}
1 \\
\sqrt{2\dt T_{0}} \length^2 \Mu \Ld\\
\sqrt{\frac{\dt}{T_{0}}}\length^2 \Md \\
\sqrt{\frac{\dt}{T_{0}}}
\end{array}
\right)
\cdot
\left(
\begin{array}{c}
\|\XEb^n\| \\
\sqrt{\frac{\dt}{T_{0}}}\|\XEb^n\| \\
\sqrt{\dt T_{0}} \|\uEb^{n}\| \\
\sqrt{\dt T_{0}}\|\Xres^n\|
\end{array}
\right)
\right]^{2}
\end{align}

so from Cauchy-Schwarz,

\begin{align*}
\begin{split}
    &\|\XEb^{n+1}\|^{2} \leq \Bigg(1 +\dt\underbrace{\left(2\Mu^2 L_{\delta_{c}}^2 L^4 T_{0} + L^4\frac{1}{T_{0}} \Md^2 + \frac{1}{T_{0}}\right)}_{\mathrm{constant}}\Bigg)\cdot \\
    &\quad \qquad \left(\|\XEb^{n}\|^2 + \frac{\dt}{T_{0}}\|\XEb^{n}\|^2 + \dt T_{0}\|\uEb^{n}\|^2 + \dt T_{0}\|\Xres^{n}\|^2\right)
\end{split}
 \\
&\leq (1 +O(\dt))\left(\left( 1 + \frac{\dt}{T_{0}}\right)\|\XEb^{n}\|^2 + \dt T_{0} \left(\|\usEb^{n}\|^2 + \frac{1}{\lambda}\|\pres^{n}\|^2+ \|\Xres^{n}\|^2\right)\right) \\
\begin{split}
&\leq(1 +O(\dt))\left(\left( 1 + \frac{\dt}{T_{0}}\right)\|\XEb^{n}\|^2 + 2\dt T_{0} \left( \frac{1}{2}\|\usEb^{n}\|^2 + \dt\frac{\mu}{\rho}\sum_{\alpha=1}^{2}\|D^{-}_{\alpha}\usEb^{n}\|^{2} \right)\right. \\
& \qquad\qquad+ \left.\dt T_{0} \left(\frac{1}{\lambda}\|\pres^{n}\|^2+ \|\Xres^{n}\|^2\right)\right)
\end{split}
\end{align*}
Now, defining
\begin{align*}
A^{n} &= \frac{1}{2}\|\usEb^{n}\|^2 + \dt\frac{\mu}{\rho}\sum_{\alpha=1}^{2}\|D^{-}_{\alpha}\usEb^{n}\|^{2} \\
B^{n} &= \frac{1}{T_{0}^2}\|\XEb^{n}\|^2
\end{align*}

and substituting into the inequalities above we obtain the the desired result.
\end{proof}

\section{Simulation}
\label{sec:sim}
In this section we run a simulation and confirm empirically the theoretical result from the preceding sections. For this experiment, we chose to simulate the formation of a K\'{a}rm\'{a}n vortex street in the wake of a flow driven past a cylinder tethered to a fixed point via a Hookean spring. The stiffness of the spring is chosen to allow the cylinder to move substantially in order to test convergence of the numerical method in the context of a moving boundary. The continuous equations we shall use to model this phenomenon are

\begin{align}
 \rho\left(\frac{\partial\ueb}{\partial t} + (\ueb\cdot\nabla)\ueb \right) &= \mu\nabla^2 \ueb - \nabla \pe + \feb(\x,t) + a(t)\mathbf{e}_{1} \label{ContMomExp} \\
\nabla\cdot\ueb & = 0 \label{ContIncompExp} \\
\feb(\x,t) &= -k(\Xeb(t)-\Xeb_{0})\dc(\x-\Xeb(t)) \label{ContFDExp}\\
\frac{d\Xeb}{dt}(t) & =  \int_{\Dom} \ueb(\x,t)\dc(\x-\Xeb)d\x \label{ContInterpExp}  \\
a(t) &= -\int_{\Omega}\mathbf{e}_{1}\cdot \feb(\x,t) d\x \label{meanflowforce}
\end{align}
where $\mathbf{e}_1,\mathbf{e}_2$ are the unit basis of the coordinate axes. This system of equations is similar to equations \eqref{ContMom}-\eqref{ContInterp} of section \ref{ibproblem} with the specific choice of forcing function: $\F(\Xeb) = -k(\Xeb - \Xeb_0)$ for some constant $k$, and the addition of a driving body force $a(t)\mathbf{e}_{1}$. The function $a(t)$ is chosen specifically such that the mean flow along the $x$-axis remains constant over time, and is therefore determined by its initial value.

In the above equations, the cylinder is modelled in very simple way.
The variable $\Xeb(t)$ keeps track of its centre, and the
\textit{footprint} of the cylinder, so to speak, is modelled by the
function $\dc(\x - \Xeb(t))$, which is used in the application of
force to the fluid, see equation \ref{ContFDExp}, and also in the evaluation of the velocity with which the cylinder is moving, see equation \ref{ContInterpExp}. We are concerned here not with the validity of this model, but rather with the convergence of the numerical method that we use to solve the model equations.

\subsection{The delta approximation $\dc$}

Before proceeding to the implementation of the algorithm, we must give a specific choice of $\dc$ as the IB method does not specify any particular choice of delta kernel. We note that for the purpose of the theory, we have only required that $\dc$ should be twice continuously differentiable and that its integral should equal to 1. With regard to differentiability, note that typical IB kernels \cite{peskin2002immersed} are only once continuously differentiable, so these cannot be used for our present purpose.  The kernel that we actually use will have three continuous derivatives -- one more than needed for
the theory. This subsection aims to motivate and discuss the specific choice of IB kernel used in this simulation, which was originally discovered and advocated by Bao, Kaye and Peskin \cite{bao2016gaussian}. To the two conditions mentioned above, we will add supplementary conditions until the kernel function is uniquely defined. Firstly, we impose the following form

\begin{equation}
\dc(\x) = \frac{1}{c^2}\varphi\left(\frac{x_{1}}{c}\right)\varphi\left(\frac{x_{2}}{c}\right) \label{DeltaDef}
\end{equation}

For a general choice of $\varphi$ this condition breaks radial symmetry. However, the condition allows $\dc$ to satisfy favourable properties on the uniform Cartesian grid $\Grid$; furthermore it turns out that we will be able to chose a $\varphi$ that is near Gaussian, making $\dc$ nearly radially symmetric. The physical parameter $c$ scales the dimensionless function $\varphi$ to the width of the immersed particle. We also require that $c$ be an integer multiple of the gridwidth; again, this restriction is chosen for the beneficial properties it confers when passing from the continuous to the discrete equations. To find $\varphi$, we look for a function with the following properties

\begin{itemize}
    \item $\varphi$ should have a bounded support. When computing the discretised analogue of the integral in \eqref{ContInterpExp}, it would be best for algorithmic efficiency to sum the integrand over the portion of the mesh that is in the vicinity of the immersed particle, rather than over the entire fluid mesh.
    \item Ideally, it should matter as little as possible where the centre of the cylinder is in relation to the fluid mesh. In other words, for any function $\psi$, we should have for all real $r$, the important point being that $r$ is not necessarily integer-valued, that
    
    \begin{equation}
        \sum_{i\in\mathbb{Z}}\psi(r-i)\varphi(r-i) \quad \text{is constant} \label{transInvariance}
    \end{equation}

    Unfortunately, this is too much to ask, and no such $\varphi$ exists\footnote{If we allow $\phi$ to have unbounded support, and restrict $\psi$ to be band limited with suitably chosen bandwidth, then it is interesting to note that equation \eqref{transInvariance} can indeed be satisfied \cite{chen2024fourier}.}. So instead, we may require the above invariance to hold for only certain choices of $\psi$ i.e. $\psi(x)\in\{1,x,x^2,x^3,\varphi(x)\}$
\end{itemize}

These requirements suggest the following conditions on $\varphi$:
 
\begin{enumerate}
	\item The function $\varphi$ has a continuous third derivative.
	\item The support of $\varphi$ is $(-3,3)$.
	\item For all real $r$, $\varphi$ satisfies the following discretised moment equations
	\begin{align}
	&\sum_{\substack{j\in\mathbb{Z}\\ j\;\mathrm{even}}} \varphi(r-j) = \sum_{\substack{j\in\mathbb{Z}\\ j\;\mathrm{even}}} \varphi(r-j) = 1/2\\
	&\sum_{j\in \mathbb{Z}} (r-j)\varphi(r-j) = 0 \\
	&\sum_{j\in \mathbb{Z}} (r-j)^2\varphi(r-j) = K \label{phi2mom}\\
	&\sum_{j\in \mathbb{Z}} (r-j)^3\varphi(r-j) = 0
	\end{align}
	\item For all real $r$, $\varphi$ satisfies the non-linear condition
	\begin{equation}
	\sum_{j\in\mathbb{Z}}\varphi^2(r-j) = C
	\end{equation}
\end{enumerate}
where $C$ and $K$ are parameters independent of $r$ that remain to be determined. From conditions 2-4, it is possible to find a continuous set of continuously differentiable functions---parametrised by $K$. For each $K$, there is a unique corresponding value of $C$ (though the explicit relation between the two constants is not simple, see \cite{bao2016gaussian} for details). Of the 1 parameter family, there is, in fact, a unique choice

\begin{equation}
K = \frac{59}{60}-\frac{\sqrt{29}}{20}
\end{equation}

such that $\varphi$ has three continuous derivatives. For This choice of $K$, $C$ is approximately $0.326$. The paper cited above gives a derivation of $\varphi$ and its explicit algebraic expression. The unique function $\varphi$ that satisfies all of the above conditions is plotted in \ref{fig:delta}

\begin{figure}[htbp]
  \centering
  \label{fig:delta}\includegraphics[width=0.6\linewidth]{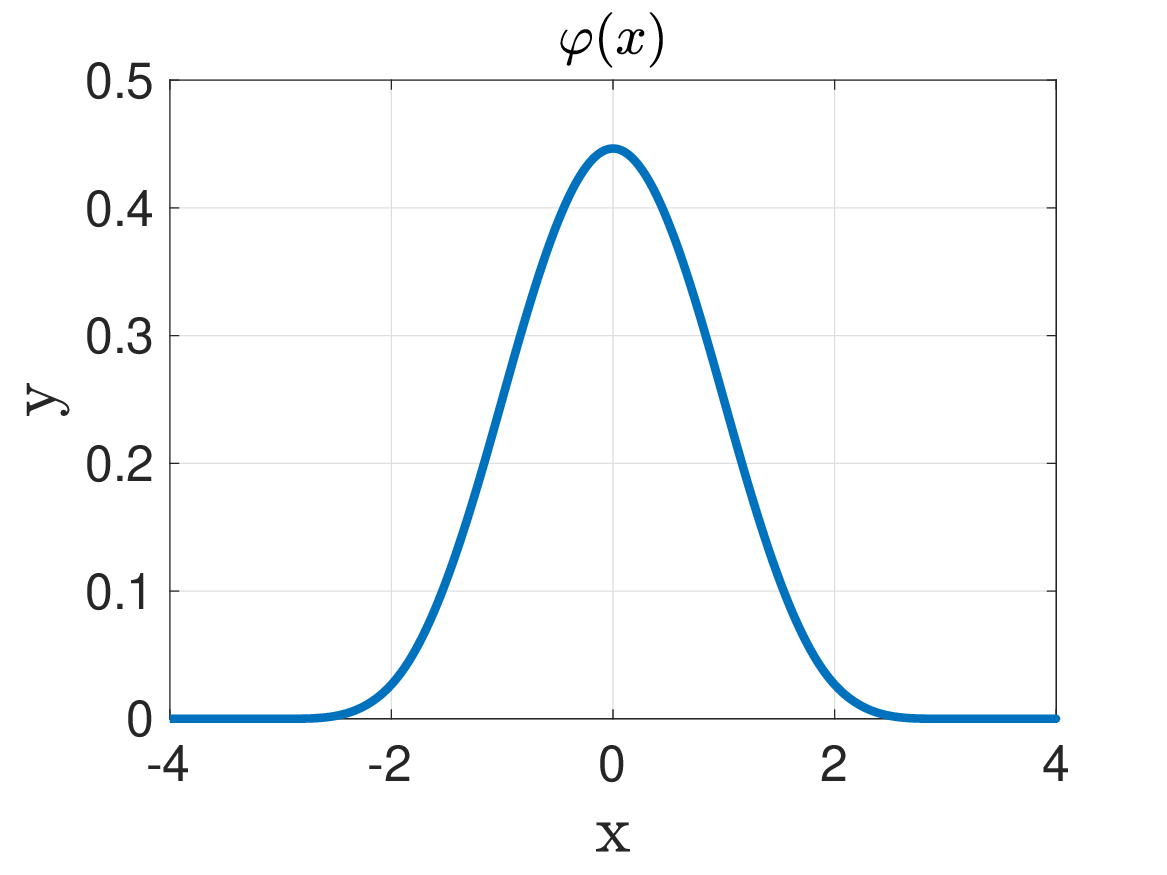}
  \caption{The function $\varphi$ defined by conditions 1-5. The support of the function is defined over a closed set--- the closure of the set of points at which the function is non-zero. This is useful here because +3 and -3 are therefore within the support and they are indeed the points at which $\varphi$ is only $C^{3}$}
  \label{fig:testfig}
\end{figure}

\begin{figure}[th]
    \centering
    \includegraphics[width=\linewidth]{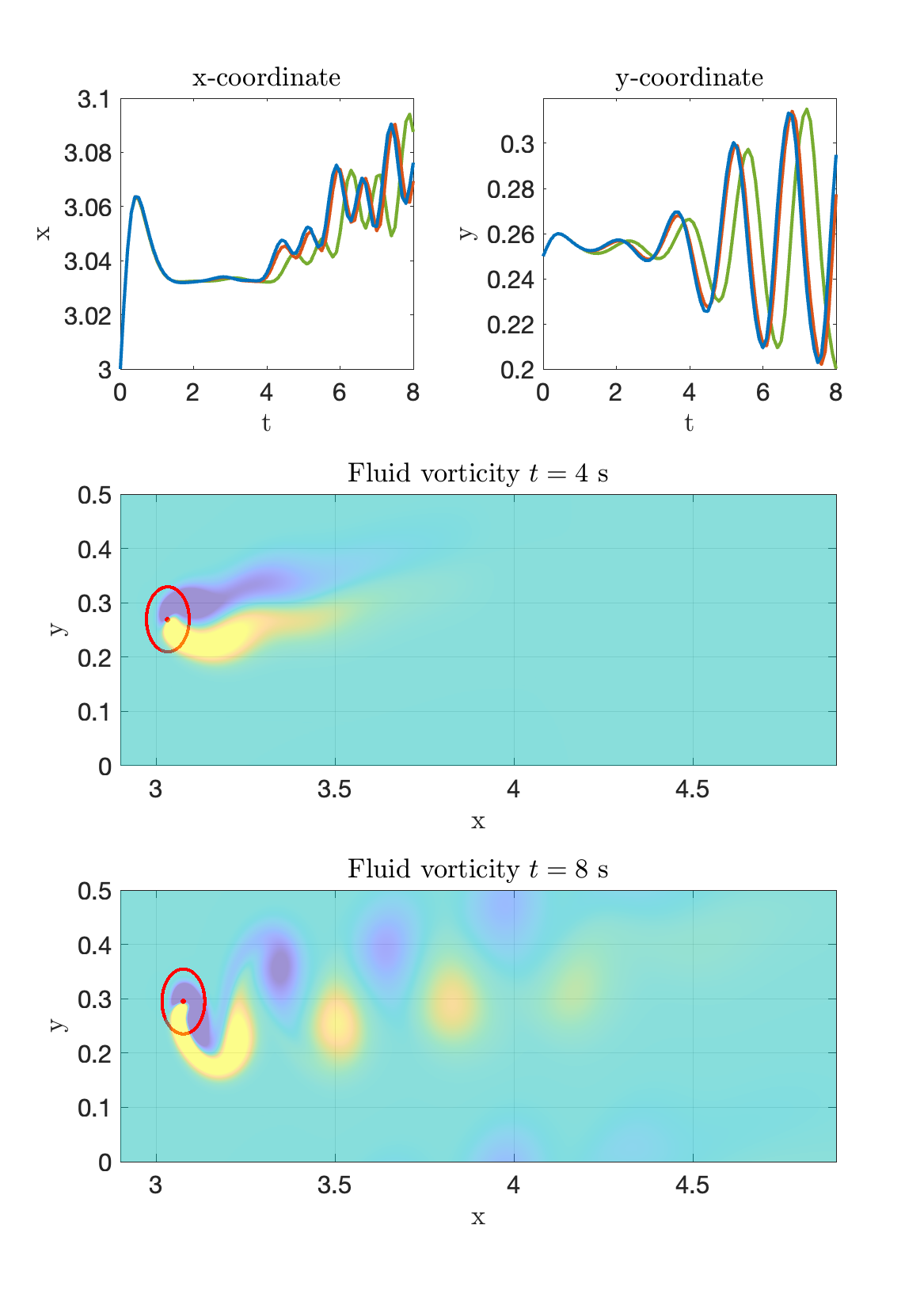}
    \caption{Results of a numerical experiment. Top: the x and y coordinates of the centre of the cylinder as a function of time. The different colours represent the analogous results for various grid refinements, with yellow representing the coarsest mesh and blue representing the finest grid. The cylinder’s x coordinate appears to reach a an equilibrium as the mean flow balances with the restoring spring force. This equilibrium breaks down at about t= 4s. This timing coincides with the moment that the vortex sheet in the wake of the cylinder becomes unstable, breaking into a vortex street. This vortex street then induces a force that pulls the cylinder further from the centre. Bottom: A colourmap of the fluid vorticity. Note the upward trend of the vortex street. This is because of the choice of initial fluid velocity (chosen to break symmetry).}
    \label{fig:snap}
\end{figure}

Although our model cylinder has no sharp edge, it is useful to think
of it as having an effective radius $R$ defined by

\begin{equation}
    R^2 = \sum_{\x\in\Grid} |\x - \Xeb(t)|^2 \delta_c(\x - \Xeb(t)) h^2 \label{DeltaDisc2mom}
\end{equation}

Thus $R$ is the root mean squared distance of all grid points from the
centre of the cylinder, with \textit{mean} defined by the normalized
weights $\dc(\x-\Xeb(t))h^2$.  Although it might appear that this
definition of the radius depends on the location $\Xeb(t)$ of the centre of the
cylinder, this is not the case, and indeed we can deduce directly
from equations \eqref{DeltaDef} and \eqref{phi2mom} that

\begin{equation}
    R^2 = 2Kc^2
\end{equation}

and this is not only independent of location but also independent of
the gridwidth h.\footnote{The IB kernel described here was created in
response to a request from Aleks Donev for a kernel with a constant
positive second moment, so that such a kernel could be used in the
representation of a particle of constant effective radius,
independent of the location of that particle with respect to a
computational mesh.}  It may also be worth mentioning that

\begin{equation}
    R^2 = \int_{\Omega}|\x-\y|^2\dc(\x-\y)d\x \label{Delta2mom}
\end{equation}

and we leave it as an interesting exercise for the reader to show
that the integral in \eqref{Delta2mom} is \textit{equal} to the sum in \eqref{DeltaDisc2mom}.

\subsection{Parameters of the Experiment}
For this simulation, we define the domain $\Omega$ to be a rectangle of 6 m in length by 0.5 m in width. Therefore, the centre of the domain is $\Xeb_{0}=(0.25 \text{ m},3\text{ m})$. The initial conditions were chosen as follows
\begin{align}
    \Xeb(0) &= \Xeb_{0} \\
    u_{1}(\x,0) &= u_{\text{mean}} \\
    u_{2}(\x,0) &= v_{0}
\end{align}

Note that because of the choice of $a(t)$, $u_{\text{mean}}$ remains equal to the mean flow along $x$-axis for all time. We also choose a non-zero initial vertical velocity $v_{0}$ to break the symmetry, and allow the discretisation errors to dominate the round-off errors. In the following table, we list the values of the physical parameters used in the simulation

\begin{table}[htbp]
{\footnotesize
  \caption{Listed here are the physical constants used in the experiment. Alongside with the heuristic definition of radius given by \eqref{Delta2mom}, we can estimate the Reynolds number at a value of 150}  \label{tab:foo}
\begin{center}
  \begin{tabular}{|c|l|}
    \hline
    Parameter & Value \\
    \hline
         $\rho$ & $1.00 \;\mathrm{kg}/\mathrm{m}^3$   \\
         $\mu$ & $4.00E-4 \;\mathrm{kg}/\mathrm{m}\cdot\mathrm{s}$ \\
         $k$ & $1.00E-1\;\mathrm{N}/\mathrm{m}^2$ \\
         $u_{\text{mean}}$ & $2.50E-1\;\mathrm{m}/\mathrm{s}$ \\
         $v_{0}$ & $4.00E-2\;\mathrm{m}/\mathrm{s}$\\
         $c$ & $1.00E-1\;\mathrm{m}$ \\
         \hline
             \end{tabular}
\end{center}
}
\end{table}
The units are obtained by assuming the fluid flow is perpendicular to the cylinder and symmetric under translations parallel to the cylinder. From these units, and from the formula $R^2 = 2Kc^2$, we can calculate the Reynolds number:

\begin{equation}
    \mathrm{Re} = 150
\end{equation}

According to \cite{anagnostopoulos1992response} we should expect to see laminar Vortex shedding for this choice of Reynold's number.

\subsection{Numerical implementation}
The numerical method employed here for solving the problem stated above is nearly identical to the method given in section \ref{algorithm}, the only change being that for each timestep the zeroth Fourier mode of the $x_{1}$ component of the fluid velocity is prescribed to equal the desired mean flow rate of $u_{\text{mean}}$. Note that this is different from evaluating $a(t)$ explicitly via formula \eqref{meanflowforce} at each timestep. One may think of this method as analogous to Chorin's projection method---at each timestep the fluid velocity is projected onto both the incompressibility {\it and} mean flow constraints. These changes from algorithm \ref{alg:thy} yields the algorithm \ref{alg:exp}:
\begin{algorithm}
\caption{Immersed Boundary Method}
\label{alg:exp}
\begin{algorithmic}
\While{$n < T/\dt$}
\State{Compute the term $\wcb^{n} = \ucb^{n}-\dt\ucb^{n}\cdot\D\ucb^{n} + \frac{\dt}{\rho}\F(\Xcb^{n})\dc(\x-\Xcb^{n})$}
\State{Set $\hat{\wc}^{n}_{1}(0) = u_{\rm{mean}}$} \Comment{$\hat{\wcb}^{n}(0)$ refers to the zeroth discrete Fourier mode}
\State{Solve $\begin{cases}
    \left(I-\dt\frac{\mu}{\rho}\Dp\cdot\Dm\right)\ucb^{n+1} = \wcb^{n} - \frac{\dt}{\rho}\D \pc^{n+1} \\
    \D \cdot \ucb^{n+1} = 0
\end{cases}$ for $\ucb^{n+1}$}
\State{Calculate $\Xcb^{n+1} = \Xcb^{n} + \dt \sum_{\x\in \Grid}\ucb^{n}(\x)\dc(\x-\Xcb^{n})\dx^2$}
\State{$n \leftarrow n+1$}
\EndWhile \\
\Return $\ucb^{n},\Xcb^{n}$
\end{algorithmic}
\end{algorithm}

Fig \ref{fig:snap} illustrates the result of the simulation outlined above up to a time $T=8s$. Note that at time $t=4s$ the instability of the symmetrical flow pattern becomes apparent. As we shall see, this instability does not  prevent convergence of the numerical scheme.

Algorithm \ref{alg:exp} was run multiple times on different resolutions. The finest resolution was on a mesh with gridwidth $\dx=0.00104$ m and a timestep size of $\dt =1.95 E-5$ s whereas the coarsest simulation was done on a gridwidth $\dx = 0.00833$ m and a timestep size of $1.25E-3$ s. As the exact solution is unknown, it is necessary to use the difference ins the output between successive simulations to estimate the order of convergence. This estimate was done in the following way: A grid $\Grid$ is selected and the results $\ucb^{T/\dt}_{\rm{coarse}},\Xcb^{T/\dt}_{\rm{coarse}}$ of the simulation are stored. The grid is then coarsened to $G_{2h}$ and the timestep size is quadrupled. The simulations are repeated in this lower resolution; the results $\ucb^{T/4\dt},\Xcb^{T/4\dt}$ are stored. The fluid velocity results are compared by measuring the $L_{2}$ norm of their difference
\begin{equation}
\left(\sum_{\x\in G_{2h}}\left\|\ucb^{T/\dt}(\x) - \ucb^{T/\dt}_{\rm{coarse}}(\x)\right\|^{2} \frac{h^2}{L^2}\right)^{\frac{1}{2}}
\end{equation}
Whereas the difference in the cylinder positions is simply the Euclidean distance
\begin{equation}
    \left\|\Xcb^{T/\dt} - \Xcb^{T/\dt}_{\rm{coarse}}\right\|
\end{equation}

It is well known \cite{leveque2007finite} that these differences should have the same asymptotic behaviour as the unobserved errors, so our prediction is that the the differences between solutions computed on successive meshes should be $O(\dx^2)$. This hypothesis is confirmed by the data in figure \ref{errplt}.

\begin{figure}[h!]
    \centering
    \includegraphics[width=0.65\linewidth]{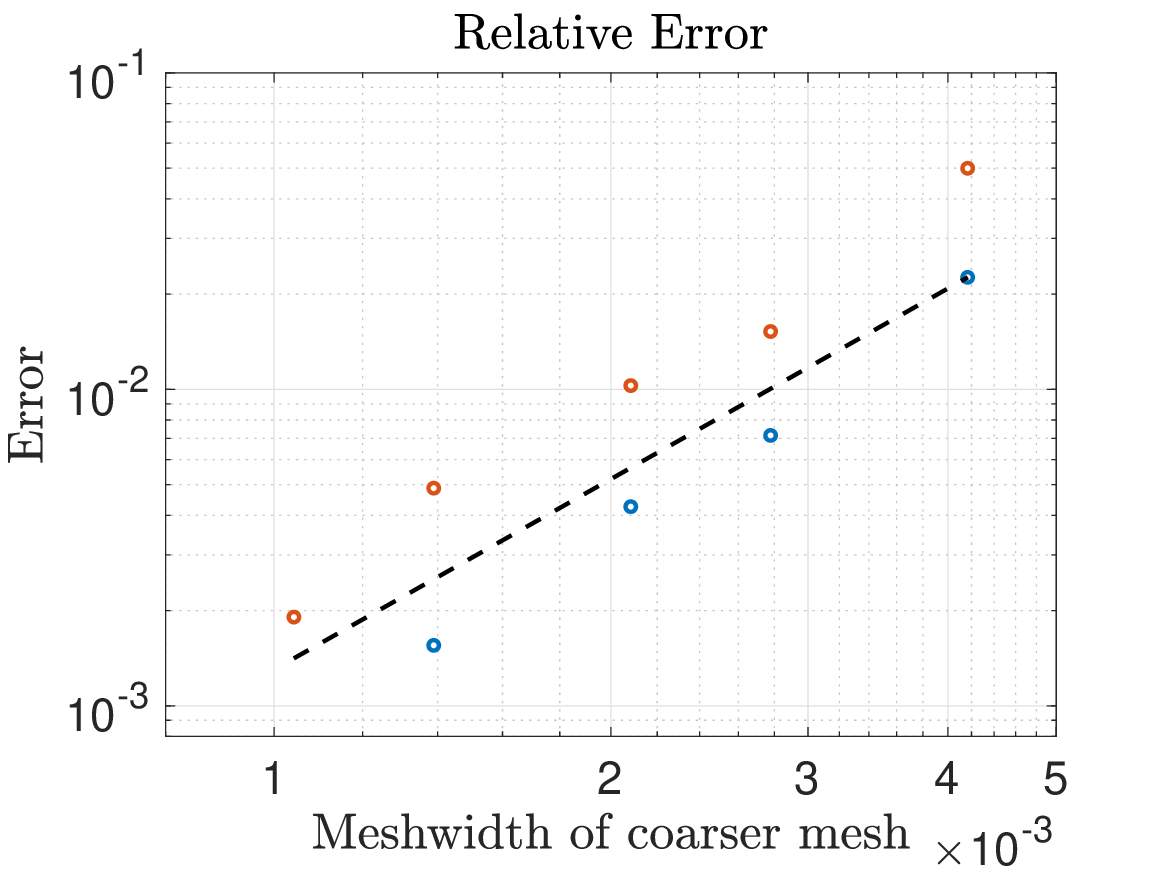}
    \caption{These data show the results of an empirical convergence analysis of the experiment described above. Each measurement is made by comparing a simulation to the same simulation on a coarser grid (double the gridwidth and quadruple the timestep size). Both simulations were run to the final time $T=8$ s and compared. The norm of the difference is then plotted on this graph. The blue data point is the discrete $L_2$ norm of the difference in the fluid velocity between the coarse and fine grids. The red data is the distance between the centres of the cylinders for the fine and coarse simulations at the final time.}
    \label{errplt}
\end{figure}

\section{Conclusion}

In practice convergence of the IB method needs to be confirmed empirically on a case by case basis. This is because current understanding of the convergence properties is limited, especially in the context of a non-linear fluid. The lack of theoretical results is due to the presence of singular forcing terms in the governing equations. Yoichiro Mori was the first to break through this complication by using linear fluid equations (Stokes flow), and by considering the velocity field generated by given boundary forces. The idea behind the present paper is different: we instead retain the non-linear elements of the problem and resolve the regularity issue by mollifying the delta function in the governing equations. This was originally inspired by physical necessity for the case in which the co-dimension of the immersed material is greater than one, but the same approach may be technically useful as a step in a convergence proof even when it is not physically required. This will be the subject of future investigation.

\pagenumbering{gobble}
\end{document}